\def\house#1{{%
    \setbox0=\hbox{$#1$}
    \vrule height \dimexpr\ht0+1.4pt width .4pt depth \dp0\relax
    \vrule height \dimexpr\ht0+1.4pt width \dimexpr\wd0+2pt depth \dimexpr-\ht0-1pt\relax
    \llap{$#1$\kern1pt}
    \vrule height \dimexpr\ht0+1.4pt width .4pt depth \dp0\relax
}}
\title[Northcott numbers for the weighted Weil heights]
{Northcott numbers for the weighted Weil heights}
\author[M. Okazaki - K. Sano]
{Masao Okazaki, Kaoru Sano}
\keywords{weighted Weil height, Northcott property, Bogomolov property, Northcott number}
\subjclass[2020]{Primary 11G50} 
\address[M. Okazaki]{Graduate School of Mathematics, Kyushu University, Motooka 744, Nishi-ku, Fukuoka 819-0395, Japan.}
\email{m-okazaki@math.kyushu-u.ac.jp}
\address[K. Sano]{Faculty of Science and Engineering, Doshisha University, Kyoto, 610-0394, Japan.}
\email{kaosano@mail.doshisha.ac.jp}
\newenvironment{parts}[0]{%
  \begin{list}{}%
    {\setlength{\itemindent}{0pt}
     \setlength{\labelwidth}{1.5\parindent}
     \setlength{\labelsep}{.5\parindent}
     \setlength{\leftmargin}{2\parindent}
     \setlength{\itemsep}{0pt}
     }%
   }%
  {\end{list}}
\newcommand{\Part}[1]{\item[\upshape#1]}
\newtheorem{thm}{Theorem}[section]
\newtheorem{lem}[thm]{Lemma}
\newtheorem{cor}[thm]{Corollary}
\newtheorem{prop}[thm]{Proposition}
\theoremstyle{definition}
\newtheorem{defn}[thm]{Definition}
\newtheorem{ques}[thm]{Question}
\newtheorem{rem}[thm]{Remark}
\newtheorem*{ack}{Acknowledgments}
\newcommand{\Q}{\mathbb{Q}} 
\newcommand{\R}{\mathbb{R}} 
\newcommand{\Z}{\mathbb{Z}} 
\newcommand{\QB}{\overline{\mathbb{Q}}} 
\newcommand{\Qtr}{\mathbb{Q}^{\textrm{tr}}} 
\newcommand{\h}{\mathfrak{h}} 
\newcommand{\e}{\varepsilon} 
\newcommand{\g}{\gamma}
\newcommand{\ie}{\textit{i.e.,~}} 
\newcommand{\eg}{\textit{e.g.,~}} 
\newcommand{\f}{\frac} 
\newcommand{\N}{\mathbb{N}}
\renewcommand{\d}{\delta} 
\renewcommand{\l}{\left} 
\renewcommand{\r}{\right} 
\renewcommand{\O}{\mathcal{O}} 
\renewcommand{\t}{\text} 
\DeclareMathOperator{\Nor}{Nor}
\begin{document} 
\begin{abstract} 
We answer the question of Vidaux and Videla about the distribution of the Northcott numbers for the Weil height. 
We solve the same problem for the weighted Weil heights. 
These heights generalize both the absolute and relative Weil height. 
Our results also refine those of Pazuki, Technau, and Widmer. 
\end{abstract} 

\maketitle

\section{Introduction}\label{Intro} 

For a subset $A\subset\QB$, a function $\h:\QB\rightarrow\R_{\geq0}$, and a real number $C>0$, we set 
\begin{align*} 
B(A,\h,C)&:=\l\{ a\in A \mid \h(a)<C \r\} \t{ and} \\
Z(A,\h)&:=\l\{ a\in A \mid \h(a)=0 \r\} 
\end{align*} 
The following definitions are important concepts when we study finiteness properties in number theory. 

\begin{defn}[\cite{BZ} and \cite{PTW2}]\label{NorthBogo} 
\ 

\begin{parts} 
\Part{(1)} 
We say that $A$ has the \textit{$\h$-Northcott property} (or $\h$-(N) for short) if the set $B(A,\h,C)$ is finite for all $C>0$. 

\Part{(2)} 
We say that $A$ has the \textit{$\h$-Bogomolov property} (or $\h$-(B) for short) if the set $B(A,\h,C)\setminus Z(A,\h)$ is finite for some $C>0$. 
\end{parts} 
\end{defn} 

Definition \ref{NorthBogo} lets us set 
\[
\Nor_\h(A):=\inf\l\{ C>0 \mid \# B(A,\h,C)=\infty \r\}. 
\] 
The non-negative number $\Nor_\h(A)$ is called the \textit{Northcott number of $A$ with respect to $\h$}, introduced in \cite{PTW2} and \cite{VV}. 
By definition, a subset $A$ has $\h$-(N) (resp. $\h$-(B)) if and only if $\Nor_\h(A)=\infty$ (resp. $\Nor_\h(A\setminus Z(A,\h))>0$). 
Here we regard $\inf\emptyset$ as $\infty$. 
We note that $\h$-(B) immediately follows from $\h$-(N). 

Let $h:\QB\rightarrow\R_{\geq0}$ be the absolute logarithmic Weil height. 
In \cite{VV}, Vidaux and Videla proposed the following question. 

\begin{ques}[{\cite[Question 6]{VV}}]\label{Taisetsunamondai} 
Which real numbers can be realized as $\Nor_h(L)$ for some field $L\subset\QB$? 
\end{ques} 

The above question was first dealt with in \cite[Theorem 3]{PTW2}, which reveals that for any given $c>0$ there exists a field $L\subset\QB$ satisfying that $c/2\leq \Nor_h(L)\leq c$. 
This result lets us expect that any positive real number can be realized as $\Nor_h(L)$ for some field $L\subset\QB$. 
Hence our main result is the following answer to Question \ref{Taisetsunamondai}. 

\begin{thm}\label{Answer} 
For any given $c>0$, we can construct a field $L$ satisfying that ${\rm Nor}_h(L)=c$. 
\end{thm}

We prove a more general result for the weighted Weil heights, which generalize the absolute and the relative Weil height. 
For each $\g\in\R$ and $a\in\QB$, we set 
\[
h_\g(a):=\deg(a)^\g h(a), 
\]
where $\deg(a):=[\Q(a):\Q]$. 
The function $h_\g$ is called the {\it $\g$-weighted Weil height}, introduced in \cite{PTW2}. 
Here we should note that $h_\g$-(B) was studied implicitly in \cite{AM} before \cite{PTW2}. 
We first remark that $h_0$ (resp. $h_1$) is the absolute (resp. relative) Weil height. 
We also note that the equality $Z(\QB, h_\g)=\mu_{\QB}\cup\{0\}$ holds for all $\g\in\R$, where $\mu_A$ is the set of roots of unity in a subset $A\subset\QB$ (see, \eg \cite[Theorem 1.5.9]{BG}). 
We denote $h_\g$-(N) (resp. $h_\g$-(B)) by $\g$-(N) (resp. $\g$-(B)) for short. 
We write $\Nor_{h_\g}(\cdot)$ as $\Nor_\g(\cdot)$ for simplicity. 
The property $0$-(N) (resp. $0$-(B)) is usually called the \textit{Northcott property} (resp. \textit{Bogomolov property}). 
There are several examples of infinite extensions of $\Q$ that have $0$-(N) or $0$-(B) 
(see, \eg \cite{AD}, \cite{BZ}, \cite{Hab}, \cite{Schi}, or \cite{Wi}). 
On the other hand, the Lehmer conjecture asserts that $\QB$ has $1$-(B) and $\Nor_1(\QB\setminus\mu_{\QB})\geq \log(s)$, where $s=1.176\ldots$ is the smallest known Salem number (see, \eg \cite[p.100]{Si07}). 
In \cite[Theorem 4]{PTW2}, Pazuki, Technau, and Widmer gave fields that have $1$-(B) but not $0$-(B). 
More precisely, for each $\g\leq 1$ and $\epsilon>0$, they constructed a field $L\subset\QB$ that has $\g$-{\rm (N)} but not $(\g-\epsilon)$-{\rm (B)}. 
As we will see in Lemma \ref{intervalization}, the sets 
\begin{align*} 
I_N(A)&:=\l\{ \g\in\R \mid A \t{ has } \g\t{-(N)} \r\} \t{ and} \\
I_B(A)&:=\l\{ \g\in\R \mid A \t{ has } \g\t{-(B)} \r\} 
\end{align*} 
are intervals of the form $(\g,\infty)$ or $[\g,\infty)$ for some $\g\in\R\cup\{\pm\infty\}$. 
Thus the field $L$ in \cite[Theorem 4]{PTW2} satisfies that $(\g-\epsilon,\infty)\supset I_B(L) \supset I_N(L)\supset [\g,\infty)$. 
However, the values of $\inf I_B(L)$ and $\inf I_N(L)$ were not mentioned in \cite[Theorem 4]{PTW2}: 

\begin{figure}[h]
\center{\includegraphics[width=11.4345cm, height=1.7787cm]{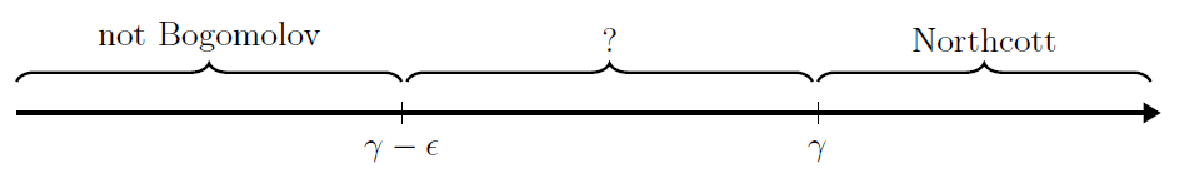}}
\end{figure} 

As we will see in Corollary \ref{inf} and Remark \ref{QB}, we remark that 
\[ 
\inf I_N(A\setminus\mu_A)=\inf I_B(A)\in[-\infty,1]
\] 
hold for all subsets $A\subset\QB$. 
Hence our generalization of Theorem \ref{Answer} is the following. 

\begin{thm}\label{main} 
Let $\g<1$ and $c>0$ be real numbers. 
We can construct a field $L\subset\QB$ satisfying the following condition {\rm (1)}, {\rm (2)}, or {\rm (3)}, respectively. 
\begin{parts} 
\Part{(1)} 
$I_N(L)=I_B(L)=[\g,\infty)$. 
\Part{(2)} 
$I_N(L)=(\g,\infty)\subsetneq[\g,\infty)=I_B(L)$ with ${\rm Nor}_\g(L)=c$. 
\Part{(3)} 
$I_N(L)=I_B(L)=(\g,\infty)$. 
\end{parts} 
\end{thm} 

\begin{rem}\label{Lehmer} 
We note that Theorem \ref{main} covers neither the case $\g=1$ nor $\g=-\infty$. 
We will discuss fields $L$ satisfying each of $I_N(L)=I_B(L)=[1,\infty)$, $I_N(L)=(1,\infty)\subsetneq[1,\infty)=I_B(L)$, and $I_N(L)=I_B(L)=\R$ in Section \ref{Supp}. 
\end{rem} 

\begin{rem} 
There are some works on calculating Northcott numbers with respect to the house. 
The notion of Northcott numbers originally stems from the Julia-Robinson numbers, which are Northcott numbers of sets of totally positive algebraic integers with respect to the house (see, \eg \cite{CVV}, \cite{GR}, \cite{Rob}, \cite{VV2}, or \cite{VV}). 
Another remarkable result is \cite[Theorem 1]{PTW2}. 
Pazuki, Technau, and Widmer constructed for any given $c\geq1$ a field whose ring of integers has Northcott number (with respect to the house) equal to $c$.
The image under the house of their rings of integers also satisfies additional topological properties motivated by work on the undecidability of rings (see, \eg \cite{Rob} and \cite{VV2}). 
\end{rem} 

The other result concerns the following theorem. 
\begin{thm}[\eg \t{\cite[Theorem 2.1]{DZ}}]\label{North} 
Let $L\subset\QB$ be a field with $0${\rm -(N)}. 
Then for any $d>0$, the set 
\[
L^{(d)}:=\l\{ a\in\QB \mid [L(a):L]\leq d \r\}
\] 
also has $0${\rm -(N)}. 
\end{thm}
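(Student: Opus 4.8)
The plan is to reduce the Northcott property for $L^{(d)}$ to that of $L$ by bounding, for each $a\in L^{(d)}$, the height $h(a)$ from below in terms of the heights of elements of $L$ naturally associated to $a$. The standard approach is to pass to the coefficients of the minimal polynomial of $a$ over $L$. Concretely, fix $a\in L^{(d)}$ and let $f(X)=X^n+c_{n-1}X^{n-1}+\dots+c_0\in L[X]$ be its minimal polynomial over $L$, so $n=[L(a):L]\leq d$. The conjugates of $a$ over $L$, call them $a=a_1,\dots,a_n$, all lie in $L^{(d)}$ as well (they generate the same extension $L(a_i)\cong L(a)$ over $L$). Each coefficient $c_j=\pm e_{n-j}(a_1,\dots,a_n)$ is an elementary symmetric function, hence $h(c_j)$ is bounded above by a constant depending only on $n$ (equivalently on $d$) times $\max_i h(a_i)$; using the conjugate-invariance of the absolute Weil height, $\max_i h(a_i)=h(a)$. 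So $h(c_j)\leq C(d)\,h(a)$ for all $j$, with $C(d)$ an explicit constant (something like $d+\log\binom{d}{j}$, or one can just take $C(d)=2d$).

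First I would make precise the finiteness input: suppose $h(a)<B$ for $a\in L^{(d)}$; I want to conclude that only finitely many such $a$ exist. By the previous paragraph all coefficients $c_j$ of the minimal polynomial of $a$ over $L$ lie in $L$ and satisfy $h(c_j)<C(d)B$. Since $L$ has $0$-(N), the set $\{c\in L\mid h(c)<C(d)B\}$ is finite; hence there are only finitely many possible tuples $(c_0,\dots,c_{n-1})$, and for each fixed $n\leq d$ only finitely many such tuples, so only finitely many polynomials $f$ arise. Each such $f$ has at most $d$ roots in $\QB$, so altogether only finitely many $a\in L^{(d)}$ satisfy $h(a)<B$. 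That is exactly $0$-(N) for $L^{(d)}$.

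The two facts I am leaning on are: (i) the absolute Weil height of a symmetric polynomial expression in the $a_i$ is controlled by $\max h(a_i)$, which is the elementary multiplicativity/additivity estimate $h(x+y)\leq h(x)+h(y)+\log 2$ and $h(xy)\leq h(x)+h(y)$ applied to the elementary symmetric polynomials (there are at most $\binom{n}{j}\leq 2^n\leq 2^d$ monomials, each a product of $j\leq d$ of the $a_i$); and (ii) conjugate invariance $h(a_i)=h(a)$. Neither is deep; both are in \cite{BG}. The only mild subtlety — and the step I expect to be the real (if modest) obstacle — is handling the uniformity in $n$: $[L(a):L]$ can be anything up to $d$, so I should note that the estimate $h(c_j)\le C(d)h(a)$ can be taken uniform over all $n\le d$ (take $C(d)=\max_{n\le d,\,0\le j<n}(j+\log\binom{n}{j})\le d+d\log 2$), and that there are only finitely many choices of the degree $n$ itself. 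Once that uniformity is recorded, the rest is the bookkeeping above. I would also remark that the same argument shows $\mathrm{Nor}_0(L^{(d)})<\infty$ with an explicit comparison $\mathrm{Nor}_0(L^{(d)})\leq C(d)^{-1}$ is the wrong direction — rather, if $h(a)<B$ forces finiteness via $h(c_j)<C(d)B<\mathrm{Nor}_0(L)$, one gets $\mathrm{Nor}_0(L^{(d)})\geq \mathrm{Nor}_0(L)/C(d)$, which is consistent with $L$ having $0$-(N), i.e. $\mathrm{Nor}_0(L)=\infty$.
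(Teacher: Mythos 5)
Your proposal follows the standard proof of Dvornicich--Zannier \cite[Theorem 2.1]{DZ} (the paper itself only cites that reference rather than reproving the statement): pass to the minimal polynomial of $a$ over $L$, observe that its roots are among the $\Q$-conjugates of $a$ so have the same absolute height, bound the heights of the coefficients $c_j \in L$, and invoke $0$-(N) for $L$ to conclude there are only finitely many possible polynomials, hence finitely many $a$. The one slip is your claim $h(c_j)\leq C(d)\,h(a)$: the elementary symmetric function estimate actually gives an affine bound $h(c_j)\leq j\,h(a)+\log\binom{n}{j}\leq d\,h(a)+d\log 2$ (multiplicative constant plus additive constant, not purely multiplicative), but since $h(a)<B$ still forces $h(c_j)<dB+d\log 2$ and that is all the finiteness argument needs, the proof is unaffected.
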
 

The well-known fact that any number field has $0$-(N) is an immediate consequence of Theorem \ref{North}. 
We show that Theorem \ref{North} fails for $h_\g$ whenever $0<\g\leq 1$. 

\begin{prop}\label{Qtr} 
Let $\Q^{\rm tr}$ be the field of all totally real numbers. 
Then for all $0<\g\leq1$ the field $\Q^{\rm tr}$ has $\g$-{\rm (N)} but the field $\Q^{\rm tr}(\sqrt{-1})$ does not have $\g$-{\rm (N)}. 
More precisely, for all $0<\g\leq1$ the set $\Q^{\rm tr}(\sqrt{-1})\setminus\mu_{\Q^{\rm tr}(\sqrt{-1})}$ does not have $\g$-{\rm (N)}. 
\end{prop}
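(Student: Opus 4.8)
The plan is to handle the two assertions separately.

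\textbf{First assertion.} For the claim that $\Qtr$ has $\g$-{\rm (N)} whenever $\g>0$, I would use the classical fact that $\Qtr$ has $0$-{\rm (B)} (see \cite{BZ,Schi}): there are $C_0>0$ and a finite set $S\subset\Qtr$, which we enlarge so as to contain $0$ and $\pm1$, with $h(a)\ge C_0$ for every $a\in\Qtr\setminus S$. If $a\in\Qtr$ satisfies $h_\g(a)<C$, then either $a\in S$, or $\deg(a)^\g C_0\le\deg(a)^\g h(a)=h_\g(a)<C$, so $\deg(a)<(C/C_0)^{1/\g}$ (here $\g>0$ is used) and $h(a)=h_\g(a)/\deg(a)^\g<C$; by Northcott's theorem there are only finitely many $a$ of bounded degree and bounded height, so $N_\g(\Qtr,C)<\infty$.

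\textbf{Second assertion: the weak form and a reduction.} First, for each $n\ge1$ we have $\sqrt{-1}=\zeta_{4n}^{n}\in\Q(\zeta_{4n})$, and $\Q(\zeta_{4n})$ is generated over its (totally real) maximal real subfield by $\sqrt{-1}$, so $\Q(\zeta_{4n})\subset\Qtr(\sqrt{-1})$; hence $\Qtr(\sqrt{-1})$ contains all roots of unity, each of which has $h_\g$-value $0$, and therefore $N_\g(\Qtr(\sqrt{-1}),C)=\infty$ for every $C>0$ and every $\g$. For the sharper statement about $\Qtr(\sqrt{-1})\setminus\mu_{\Qtr(\sqrt{-1})}$ I must exhibit infinitely many distinct elements of $\Qtr(\sqrt{-1})$, none a root of unity, with $h_\g$-values bounded, uniformly for $0<\g\le1$. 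Since $\deg(a)^\g\le\deg(a)$ for $\g\le1$, it suffices to bound $h_1(a)=\deg(a)h(a)$, i.e.\ to force $h$ to decay like $1/\deg(a)$.

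\textbf{Construction.} Fix a prime $p\equiv1\pmod4$, factor $p=\pi\bar\pi$ in $\Z[\sqrt{-1}]$ with $\pi=a+b\sqrt{-1}$ (so $ab\ne0$), and note $h(\pi/\bar\pi)=\tfrac12\log p$ (its numerator and denominator ideals are the two distinct primes over $p$, and its archimedean absolute value is $1$). Put $\beta_0:=\pi/\sqrt p=\tfrac{a}{\sqrt p}+\tfrac{b}{\sqrt p}\sqrt{-1}\in\Q(\sqrt p,\sqrt{-1})\subset\Qtr(\sqrt{-1})$, so that $\beta_0\bar\beta_0=1$, $\beta_0^2=\pi/\bar\pi$, and no conjugate of $\beta_0$ is real. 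Define $\beta_n$ recursively by extracting square roots inside $\Qtr(\sqrt{-1})$: writing $\beta_{n-1}=w_{n-1}+v_{n-1}\sqrt{-1}$ with $w_{n-1},v_{n-1}\in\Qtr$ (so $w_{n-1}^2+v_{n-1}^2=\beta_{n-1}\bar\beta_{n-1}=1$), the element $\tfrac{1+w_{n-1}}{2}$ is totally positive --- for if some conjugate of $w_{n-1}$ were $-1$, the corresponding conjugate of $\beta_{n-1}$ would be $-1$ and hence that of $\beta_0=\beta_{n-1}^{2^{n-1}}$ would lie in $\{1,-1\}\subset\R$, contradicting that $\beta_0$ has no real conjugate --- so it has a totally real square root $w_n\ne0$; set $v_n:=v_{n-1}/(2w_n)\in\Qtr$ and $\beta_n:=w_n+v_n\sqrt{-1}$, so that $\beta_n\in\Qtr(\sqrt{-1})$, $\beta_n^2=\beta_{n-1}$, and $\beta_n\bar\beta_n=1$. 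Then $\beta_n^{2^{n+1}}=\pi/\bar\pi$ is not a root of unity, so $\beta_n$ is not either, and $h(\beta_n)=2^{-(n+1)}h(\pi/\bar\pi)=\tfrac{\log p}{2^{n+2}}$. Moreover $\Q(w_j)\supseteq\Q(w_{j-1})$ (since $w_{j-1}=2w_j^2-1$) and, telescoping, $v_n=v_0/(2^nw_1\cdots w_n)$ with $\sqrt p=a/w_0\in\Q(w_0)$, so $\beta_n\in\Q(w_n,\sqrt{-1})$, which has degree at most $2^{n+2}$ over $\Q$; hence $h_\g(\beta_n)\le(2^{n+2})^\g\cdot\tfrac{\log p}{2^{n+2}}\le\log p$ for every $0<\g\le1$. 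The $\beta_n$ have strictly decreasing positive heights, so they are infinitely many distinct elements of $\Qtr(\sqrt{-1})\setminus\mu_{\Qtr(\sqrt{-1})}$, and thus $N_\g(\Qtr(\sqrt{-1})\setminus\mu_{\Qtr(\sqrt{-1})},2\log p)=\infty$.

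\textbf{Main difficulty.} The crux is legitimizing the square-root step at every level, i.e.\ verifying that $\tfrac{1+w_{n-1}}{2}$ remains totally positive all the way up the tower; this is exactly where the arithmetic of $\pi/\bar\pi$ enters --- its two conjugates $\pi/\bar\pi$ and $\bar\pi/\pi$ differ from $1$ because $b\ne0$ --- and it is also why one cannot instead adjoin $2^n$-th roots of $p$, which leave $\Qtr(\sqrt{-1})$ as soon as $n\ge2$. A secondary point demanding care is the degree bound $\deg(\beta_n)\le2^{n+2}$, which converts the decay of $h(\beta_n)$ into the uniform bound on $h_\g(\beta_n)$; it rests on the identity $v_n=v_{n-1}/(2w_n)$, which shows that no generator beyond the successive square roots $w_1,\dots,w_n$ (each of relative degree $\le2$) is adjoined.
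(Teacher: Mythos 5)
Your proof is correct, and for the second assertion it follows the same underlying idea as the paper — exhibit roots of a ratio of conjugate Gaussian primes $\pi/\bar\pi$ (with $p = \pi\bar\pi$, $p \equiv 1 \pmod 4$) inside $\Qtr(\sqrt{-1})$ whose $h_\g$-values stay bounded because $h$ decays like $1/\deg$ — but it differs in one meaningful respect. The paper takes $a_k = ((2-\sqrt{-1})/(2+\sqrt{-1}))^{1/k}$ for all $k\in\Z_{>0}$ and simply \emph{cites} \cite[Section 5]{ASZ} for the fact that an appropriate $k$-th root can be chosen in $\Qtr(\sqrt{-1})$, while you restrict to $k=2^{n+1}$ and give a self-contained inductive construction: writing $\beta_{n-1}=w_{n-1}+v_{n-1}\sqrt{-1}$ with $w_{n-1},v_{n-1}\in\Qtr$ and $w_{n-1}^2+v_{n-1}^2=1$, the half-angle identity $w_n=\sqrt{(1+w_{n-1})/2}$, $v_n=v_{n-1}/(2w_n)$ produces a square root inside $\Qtr(\sqrt{-1})$, and your total-positivity check (via $-1\le\sigma(w_{n-1})\le1$ and exclusion of $\sigma(w_{n-1})=-1$ because $\beta_0$ has no real conjugate) correctly legitimizes each step. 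You also verify the degree bound $\deg(\beta_n)\le 2^{n+2}$ by showing $v_n\in\Q(w_n)$ through $v_n=v_0/(2^n w_1\cdots w_n)$ and $\Q(w_{j-1})\subset\Q(w_j)$, which is exactly what is needed to turn $h(\beta_n)=(\log p)/2^{n+2}$ into $h_\g(\beta_n)\le\log p$ for all $0<\g\le1$. For the first assertion you argue directly from the Bogomolov property of $\Qtr$ plus Northcott's theorem, which is the same content as the paper's appeal to its Proposition \ref{weak}. Net effect: your argument avoids the external citation at the cost of a longer but elementary construction, and only uses $2$-power roots rather than all $k$-th roots, which is plenty.
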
 

\subsection*{Outline of this paper} 
In Section \ref{Pre}, we study properties of $I_N(A)$ and $I_B(A)$ that we use later. 
As a consequence of Proposition \ref{weak}, we prove Proposition \ref{Qtr} at the end of Section \ref{Pre}. 
In Section \ref{Kakai}, we give some lemmata needed to calculate Northcott numbers. 
In Section \ref{proof}, we give a more explicit form of Theorem \ref{main} and prove it. 
We also compare our fields with those constructed in \cite{PTW2} and then we present further problems. 
In Section \ref{Supp}, we give some supplemental remarks on the cases $\g=1$ and $\g=-\infty$ of the conditions in Theorem \ref{main}.

\section{Intervals associated with $\g$-(N) and $\g$-(B)}\label{Pre} 

Let $A\subset\QB$. 
In this section, we give some basic properties of $I_N(A)$ and $I_B(A)$ defined in Section \ref{Intro}. 

\begin{lem}\label{intervalization} 
Let $\g\in\R$. 
If $A\subset\QB$ has $\gamma$-{\rm (N)} {\rm (}resp. $\gamma$-{\rm (B)}{\rm )}, the set $A$ also has $\delta$-{\rm (N)} {\rm (}resp. $\delta$-{\rm (B)}{\rm )} for all $\delta>\gamma$. 
\end{lem}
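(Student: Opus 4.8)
The plan is to compare the heights $h_\gamma$ and $h_\delta$ on a fixed element and exploit the fact that $\deg(a)^{\delta-\gamma} \geq 1$ when $\delta > \gamma$ and $\deg(a) \geq 1$. Concretely, for any $a \in \QB$ we have
\[
h_\delta(a) = \deg(a)^\delta h(a) = \deg(a)^{\delta-\gamma}\cdot\deg(a)^\gamma h(a) = \deg(a)^{\delta-\gamma} h_\gamma(a),
\]
so $h_\delta(a) \geq h_\gamma(a)$ for every $a$, since $\deg(a) \geq 1$ and $\delta - \gamma > 0$. This pointwise inequality immediately gives the inclusion of sublevel sets: for every $C > 0$,
\[
\{a \in A \mid h_\delta(a) < C\} \subseteq \{a \in A \mid h_\gamma(a) < C\},
\]
hence $N_\delta(A,C) \leq N_\gamma(A,C)$.

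From here both statements follow at once. If $A$ has $\gamma$-(N), then $N_\gamma(A,C) < \infty$ for all $C > 0$, so $N_\delta(A,C) < \infty$ for all $C > 0$, i.e. $A$ has $\delta$-(N). For the Bogomolov case, observe that $\mu_A$ does not depend on the weight (it is the set of roots of unity in $A$, equivalently $\{a \in A \mid h(a) = 0\} = \{a \in A \mid h_\gamma(a) = 0\}$ for any $\gamma$), so the same containment holds with $A$ replaced by $A \setminus \mu_A$; thus if $N_\gamma(A\setminus\mu_A, C) < \infty$ for some $C > 0$, then $N_\delta(A\setminus\mu_A, C) < \infty$ for that same $C$, so $A$ has $\delta$-(B).

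There is essentially no obstacle here; the only point requiring a word of care is that $\deg(a) \geq 1$, which holds trivially since $\deg(a) = [\Q(a):\Q]$ is a positive integer, and that $h(a) \geq 0$, so that multiplying by $\deg(a)^{\delta-\gamma} \geq 1$ can only increase (or preserve) the value. One might also remark, for completeness, that this monotonicity is exactly what makes $I_N(A)$ and $I_B(A)$ upward-closed, so that (together with the observation that they are nonempty or empty) they are intervals of the form $(\gamma,\infty)$, $[\gamma,\infty)$, $\R$, or $\emptyset$ as asserted in the introduction.
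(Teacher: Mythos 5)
Your proof is correct and follows essentially the same route as the paper's: both rest on the pointwise inequality $h_\gamma(a)\leq h_\delta(a)$ for $\delta>\gamma$ (coming from $\deg(a)\geq 1$) and on the observation that $\mu_A$ is independent of the weight $\gamma$. Your phrasing via sublevel-set containment $\{h_\delta<C\}\subseteq\{h_\gamma<C\}$ is a slightly cleaner packaging of the same idea, whereas the paper restates $\gamma$-(B) as a uniform lower bound $h_\gamma\geq D$ on $A\setminus\mu_A$; the two formulations are equivalent and the substance is identical.
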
 
\begin{proof} 
Let $\d>\g$ and $A\subset\QB$. 
Since the inequality $h_\g(a)\leq h_\d(a)$ holds for all $a\in\QB$, the property $\d$-(N) of $A$ immediately follows from $\g$-(N) of $A$. 
On the other hand, a set $A$ having $\g$-(B) means that there exists a constant $D>0$ such that $h_\g(a)\geq D$ for all $a\in A\setminus (\mu_A\cup\{0\})$. 
Thus $\d$-(B) of $A$ also follows from $\g$-(B) of $A$ since we know that $h_\d(a)\geq h_\g(a)\geq D$ for all $a\in A\setminus (\mu_A\cup\{0\})$. 
\end{proof} 

\begin{prop}\label{interval} 
For each subset $A\subset\QB$, the sets $I_N(A)$ and $I_B(A)$ are intervals of forms $(\g,\infty)$ or $[\g,\infty)$ for some $\g\in\R\cup\{\pm\infty\}$. 
\end{prop}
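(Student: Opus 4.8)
The plan is to show that each of $I_N(A)$ and $I_B(A)$ is an ``upward-closed'' subset of $\R$, \ie a set which, together with any point $\g$, contains the whole ray $(\g,\infty)$; such a set is necessarily an interval of one of the two stated forms (with left endpoint possibly $-\infty$, and the empty set corresponding to $\g = +\infty$, which is not among the listed forms but is vacuously excluded once we observe that any set with a point is of the stated shape---or we simply note that if $I_N(A)=\emptyset$ the statement is interpreted as holding with $\g = +\infty$, or handle it explicitly). Concretely, first I would invoke Lemma \ref{intervalization}: it says precisely that $\g \in I_N(A)$ implies $(\g,\infty) \subset I_N(A)$, and likewise for $I_B(A)$. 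So $I_N(A)$ and $I_B(A)$ are upward closed.

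Next I would set $\g_N := \inf I_N(A) \in \R \cup \{-\infty, +\infty\}$ (with the convention $\inf \emptyset = +\infty$), and similarly $\g_B$. The upward-closedness gives $(\g_N, \infty) \subset I_N(A)$: indeed, any $\d > \g_N$ is, by definition of infimum, $> \g$ for some $\g \in I_N(A)$, hence lies in $I_N(A)$ by Lemma \ref{intervalization}. Conversely $I_N(A) \subset [\g_N, \infty)$ by definition of $\g_N$ as a lower bound. Therefore $I_N(A)$ is either $(\g_N,\infty)$ or $[\g_N,\infty)$ according to whether the endpoint $\g_N$ itself belongs to $I_N(A)$ --- and if $\g_N = -\infty$ both descriptions coincide with $\R = (-\infty,\infty)$. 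The identical argument applies verbatim to $I_B(A)$.

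The only genuine point requiring a word of care is the degenerate case $I_N(A) = \emptyset$ (resp. $I_B(A) = \emptyset$): this happens exactly when $A$ has no $\g$-(N) for any $\g$, for instance $A = \QB$. Strictly speaking $\emptyset$ is not of the form $(\g,\infty)$ or $[\g,\infty)$ for real $\g$, so I would either (a) tacitly include it via $\g = +\infty$, or (b) remark that in all cases of interest in this paper the relevant sets are nonempty, or (c) state the proposition as ``$I_N(A)$ and $I_B(A)$ are, if nonempty, intervals of the stated forms.'' I expect the main (and in fact only) obstacle is this bookkeeping about endpoints and emptiness; the mathematical content is entirely contained in Lemma \ref{intervalization}, so the proof is essentially a one-line deduction plus these conventions. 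I would write it up in two or three sentences.

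\begin{proof}
We treat $I_N(A)$; the argument for $I_B(A)$ is identical, replacing ``(N)'' by ``(B)'' throughout. If $I_N(A)=\emptyset$ there is nothing to prove (or one regards this as the case $\g=+\infty$), so assume $I_N(A)\neq\emptyset$ and put $\g:=\inf I_N(A)\in\R\cup\{-\infty\}$. For any $\d>\g$, the definition of the infimum yields some $\g'\in I_N(A)$ with $\g'<\d$, and then $\d\in I_N(A)$ by Lemma \ref{intervalization}; hence $(\g,\infty)\subset I_N(A)$. On the other hand, $\g$ is a lower bound for $I_N(A)$, so $I_N(A)\subset[\g,\infty)$. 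Therefore $I_N(A)$ equals $(\g,\infty)$ or $[\g,\infty)$ according as $\g\notin I_N(A)$ or $\g\in I_N(A)$ (when $\g=-\infty$ these two sets coincide and equal $\R$).
\end{proof}
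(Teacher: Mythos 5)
Your proof is correct and is exactly the paper's argument: the paper simply says the proposition is an immediate consequence of Lemma \ref{intervalization}, and you spell out the (routine) deduction from upward-closedness to interval form, plus the edge case $I_N(A)=\emptyset$. No substantive difference in approach.
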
 

\begin{proof} 
This is an immediate consequence of Lemma \ref{intervalization}. 
\end{proof} 

By the definitions of $\g$-(N) and $\g$-(B), we know that $I_N(A)\subset I_B(A)$, \ie $\inf I_N(A)\geq\inf I_B(A)$. 
We actually have $\inf I_N(A\setminus\mu_A)=\inf I_B(A)$. 
This is an immediate consequence of the following proposition. 

\begin{prop}\label{weak} 
Let $\g\in\R$. 
If $A\subset\QB$ has $\g$-{\rm (B)}, the set $A\setminus\mu_A$ has $\d$-{\rm (N)} for all $\d>\g$. 
\end{prop}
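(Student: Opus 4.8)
The claim is: if $A$ has $\g$-(B), then $A\setminus\mu_A$ has $\d$-(N) for every $\d>\g$. The natural strategy is to exploit the gap in the definition of (B): there is a constant $D>0$ with $h_\g(a)\geq D$ for all $a\in A\setminus\mu_A$, equivalently $\deg(a)^\g h(a)\geq D$, hence $h(a)\geq D\deg(a)^{-\g}$. I want to bound, for a fixed threshold $C>0$, the number of $a\in A\setminus\mu_A$ with $h_\d(a)=\deg(a)^\d h(a)<C$. Combining the two inequalities gives $D\deg(a)^{\d-\g}\leq \deg(a)^\d h(a)=h_\d(a)<C$, so $\deg(a)^{\d-\g}<C/D$; since $\d-\g>0$, this forces $\deg(a)<(C/D)^{1/(\d-\g)}$, i.e. the degree of every such $a$ is bounded by an absolute constant $d_0=d_0(C,D,\d,\g)$.

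**Finishing via Northcott.** Once the degree is bounded, say $\deg(a)\leq d_0$, the condition $h_\d(a)<C$ gives $h(a)=\deg(a)^{-\d}h_\d(a)<\max(1,d_0^{-\d})\cdot C=:C'$ (the max handles both signs of $\d$). So every element of $A\setminus\mu_A$ with $h_\d(a)<C$ lies in the set $\{a\in\QB:\deg(a)\leq d_0,\ h(a)<C'\}$, which is finite by the classical Northcott theorem for the absolute Weil height. Therefore $N_\d(A\setminus\mu_A,C)<\infty$ for every $C>0$, which is exactly $\d$-(N) for $A\setminus\mu_A$.

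**Main obstacle.** There is no serious obstacle here; the argument is a short two-inequality manipulation plus an appeal to Northcott's finiteness theorem. The only point requiring a little care is the bookkeeping of the exponents when $\d$ can be negative (so that $\deg(a)^{-\d}$ is increasing rather than decreasing in $\deg(a)$), which is why I insert the $\max(1,d_0^{-\d})$ factor; and one should note $\d>\g$ is used precisely to conclude that $\deg(a)^{\d-\g}<C/D$ bounds $\deg(a)$ from above rather than below. I would also remark at the end that this proposition, together with $I_N(A\setminus\mu_A)\subset I_B(A\setminus\mu_A)\subset I_B(A)$, yields $\inf I_N(A\setminus\mu_A)=\inf I_B(A)$, i.e. Corollary \ref{inf} (this is the use advertised just before the statement).
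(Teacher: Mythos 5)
Your proof is correct and follows essentially the same route as the paper's: from $\g$-(B) get a uniform lower bound $h_\g(a)\geq D$, combine with $h_\d(a)<C$ to bound $\deg(a)$ by a constant, deduce a bound on $h(a)$, and invoke the classical Northcott finiteness theorem (which is exactly Theorem~\ref{North} applied with $L=\Q$). The only cosmetic difference is that the paper splits the deduction of the bound on $h(a)$ into the cases $\d\geq 0$ and $\d<0$, whereas you absorb both cases into the single factor $\max(1,d_0^{-\d})$.
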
 

\begin{proof} 
Take any $C>0$ and $a\in A\setminus(\mu_A\cup\{0\})$ with $h_\d(a)<C$. 
Since $A$ has $\g$-(B), there exists a constant $D>0$ independent of $a$ such that $h_\g(a)\geq D$. 
Thus we have 
\[
\deg(a)
=\l(\f{h_\d(a)}{h_\g(a)}\r)^{\f{1}{\d-\g}}
<\l(\f{C}{D}\r)^{\f{1}{\d-\g}}. 
\tag{2.1}\label{2.1} 
\] 

If $\d\geq 0$, then we have 
\[
h(a)\leq \deg(a)^\d h(a)=h_\d(a)<C. 
\tag{2.2}\label{2.2} 
\] 
By Theorem \ref{North}, the conditions (\ref{2.1}) and (\ref{2.2}) imply that the set $B(A\setminus\mu_A, h_\d,C)$ is finite for the case $\d\geq0$. 

On the other hand, if $\d<0$, then (\ref{2.1}) implies that $\deg(a)^{-\d}<(C/D)^{\f{-\d}{\d-\g}}$. 
Therefore we have 
\[
h(a)
=\deg(a)^{-\d}h_\d(a) 
<\l(\f{C}{D}\r)^{\f{-\d}{\d-\g}}C. 
\tag{2.3}\label{2.3} 
\] 
Again by Theorem \ref{North}, the conditions (\ref{2.1}) and (\ref{2.3}) also imply that the set $B(A\setminus\mu_A, h_\d,C)$ is finite for the case $\d<0$. 
\end{proof} 

\begin{cor}\label{inf} 
For any subset $A\subset\QB$, the equality $\inf I_N(A\setminus\mu_A)=\inf I_B(A)$ holds. 
In addition, if $A$ contains only finitely many roots of unity, we have $\inf I_N(A)=\inf I_B(A)$. 
\end{cor}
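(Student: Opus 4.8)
The plan is to combine the trivial inclusion $I_N(\cdot)\subseteq I_B(\cdot)$ with Proposition \ref{weak}, after first recording a stability property of the Bogomolov interval under removing roots of unity. The key initial observation is that $I_B(A)=I_B(A\setminus\mu_A)$: since $A\setminus\mu_A$ contains no root of unity, $\mu_{A\setminus\mu_A}=\emptyset$, so for every $\g\in\R$ the set $A\setminus\mu_A$ has $\g$-(B) exactly when $N_\g(A\setminus\mu_A,C)<\infty$ for some $C>0$, which is literally the definition of $A$ having $\g$-(B).

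Granting this, I would establish the two inequalities. For ``$\geq$'': since $\g$-(N) implies $\g$-(B), we always have $I_N(B)\subseteq I_B(B)$, hence $\inf I_N(B)\geq\inf I_B(B)$; applying this with $B=A\setminus\mu_A$ and using the observation above gives $\inf I_N(A\setminus\mu_A)\geq\inf I_B(A\setminus\mu_A)=\inf I_B(A)$. For ``$\leq$'': set $\g_0:=\inf I_B(A)\in\R\cup\{-\infty\}$ and fix an arbitrary real $\d>\g_0$; choose $\g'$ with $\g_0<\g'<\d$ (any $\g'<\d$ if $\g_0=-\infty$). By Proposition \ref{interval}, $I_B(A)$ is an interval with infimum $\g_0$, so $\g'\in I_B(A)$, i.e.\ $A$ has $\g'$-(B); Proposition \ref{weak} then yields that $A\setminus\mu_A$ has $\d$-(N), so $\d\in I_N(A\setminus\mu_A)$. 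As $\d>\g_0$ was arbitrary, $\inf I_N(A\setminus\mu_A)\leq\g_0=\inf I_B(A)$. The two inequalities together prove the first assertion.

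For the additional statement, suppose $\mu_A$ is finite. For any $\d\in\R$ and any $C>0$, the quantities $N_\d(A,C)$ and $N_\d(A\setminus\mu_A,C)$ differ by at most $\#\mu_A<\infty$, so $A$ has $\d$-(N) if and only if $A\setminus\mu_A$ does; hence $I_N(A)=I_N(A\setminus\mu_A)$, and therefore $\inf I_N(A)=\inf I_N(A\setminus\mu_A)=\inf I_B(A)$ by the first part.

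I do not expect a serious obstacle: the argument is essentially bookkeeping with the interval description from Proposition \ref{interval}. The only points requiring a little care are the degenerate case $\inf I_B(A)=-\infty$ (equivalently $I_B(A)=\R$), handled by the parenthetical choice of $\g'$ above, and the initial identity $I_B(A)=I_B(A\setminus\mu_A)$, which hinges on $A\setminus\mu_A$ being free of roots of unity.
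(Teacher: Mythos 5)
Your proof is correct and follows the route the paper intends: the corollary is presented there as an immediate consequence of Proposition \ref{weak}, and your argument simply supplies the bookkeeping — the observation $I_B(A)=I_B(A\setminus\mu_A)$, the trivial inclusion $I_N\subseteq I_B$, the application of Proposition \ref{weak} via the interval description, and the finite-difference comparison of counting functions for the second assertion. No gaps.
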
 

\begin{rem}\label{Abel} 
We can not remove the assumption ``excluding $\mu_A$'' in Proposition \ref{weak}. 
This is because the set $\mu_{\QB}$ clearly has $\g$-(B) but not $\g$-(N) for all $\g\in\R$. 
\end{rem} 

\begin{rem}\label{QB} 
We set 
\[
h_{\rm Dob}(a):=\l(\f{\log'(\deg(a))}{\log'\log(\deg(a))}\r)^3h_1(a), 
\] 
where $\log'(\cdot):=\max\{1,\log(\cdot)\}$. 
In \cite{Do}, Dobrowolski proved that $\QB$ has $h_{\rm Dob}$-(B). 
Hence there exists $D>0$ such that $f(a)\geq D$ for all $a\in\QB\setminus(\mu_{\QB}\cup\{0\})$. 
It is clear that for any $\e>0$, if $\deg(a)$ is sufficiently large, the inequality 
\[
\deg(a)^\e>\l(\f{\log'(\deg(a))}{\log'\log(\deg(a))}\r)^3 
\] 
holds. 
Thus $\QB$ has $(1+\e)$-(B) for any $\e>0$ by Theorem \ref{North}. 
Therefore, by Corollary \ref{inf}, we have 
\[ 
I_N(\QB\setminus\mu_{\QB})=(1,\infty)
\ \t{ and } \ 
\inf I_N(A\setminus\mu_A)=\inf I_B(A)\in[-\infty,1] 
\] 
for all subsets $A\subset\QB$. 
Note that these immediately follow if we assume the Lehmer conjecture. 
\end{rem} 

As a consequence of Proposition \ref{weak}, we prove Proposition \ref{Qtr}. 

\noindent 
\begin{proof}[Proof of Proposition \ref{Qtr}] 
It is known that $\Q^{\rm tr}$ has $0$-(B) (see \cite{Schi}). 
Thus $\Qtr$ has $\g$-(N) for each $\g>0$ by the equality $\mu_{\Qtr}=\{\pm1\}$ and Proposition \ref{weak}. 
On the other hand, the field $\Qtr(\sqrt{-1})$ does not have $\g$-(N) for all $0<\g\leq 1$. 
Indeed, the algebraic number $a_k:=((2-\sqrt{-1})/(2+\sqrt{-1}))^{1/k}$ is an element of $\Qtr(\sqrt{-1})$ for each $k\in\Z_{>0}$ (see, \eg \cite[Section 5]{ASZ}). 
Since we have the inequalities 
\[
h_\g(a_k)=\deg(a_k)^\g h(a_1^{1/k})\leq \f{2^\g}{k^{1-\g}}h(a_1) \leq 2h(a_1), 
\] 
the field $\Qtr(\sqrt{-1})$ has infinitely many elements with bounded value of $h_\g$. 
\end{proof}

\section{Some remarks on lower bounds for heights}\label{Kakai} 

Throughout the rest of the paper, we denote the set of positive integers by $\N$. 
This section is devoted to giving some technical lemmata to calculate the $\g$-Northcott numbers. 
Theorem \ref{Siiq} and Lemma \ref{NNumber} allow us to get a lower bound for the Northcott number of our fields. 

\begin{thm}\label{Siiq} 
Let $K$ be a number field. 
Assume that $a\in \QB$ satisfies that $[K(a):K]>1$. 
We set $M:=K(a)$ and $m:=[M:K]$. 
Then we have the inequality 
\[
h(a)\geq \f{1}{2(m-1)}\l(\f{\log(N_{K/\Q}(D_{M/K}))}{m[K:\Q]}-\log(m)\r), 
\] 
where $N_{K/\Q}$ is the usual norm and $D_{M/K}$ is the relative discriminant ideal of the extension $M/K$. 
\end{thm}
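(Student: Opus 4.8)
The plan is to derive this as a consequence of the standard estimate relating the discriminant of $M/K$ to heights of a primitive element, via the different and the ramification filtration — essentially Silverman's bound on the difference between the discriminant and the height (sometimes packaged under the name of the "$\delta$-lemma" or Silverman's inequality for $\mathrm{disc}$ vs. height). Concretely, if $f(X) = X^m + c_{m-1}X^{m-1} + \cdots + c_0 \in K[X]$ is the minimal polynomial of $a$ over $K$, then the discriminant of the order $\O_K[a]$ equals $\pm N_{M/K}(f'(a))$, and this is an $\O_K$-multiple of the relative discriminant ideal $D_{M/K}$, so $N_{K/\Q}(D_{M/K})$ divides $|N_{M/\Q}(f'(a))|$ up to the index $[\O_M : \O_K[a]]^2$, and in particular $N_{K/\Q}(D_{M/K}) \leq |N_{M/\Q}(f'(a))|$.

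First I would bound $|N_{M/\Q}(f'(a))|$ from above in terms of $h(a)$. Write $a = a_1, \dots, a_m$ for the conjugates of $a$ over $K$, so that $f'(a_i) = \prod_{j \neq i}(a_i - a_j)$; then $N_{M/\Q}(f'(a))$ is, up to sign, $\prod_{\sigma}\prod_{i \neq j}(\sigma a_i - \sigma a_j)$ over the $[K:\Q]$ embeddings $\sigma$ of $K$, i.e. a product of $[K:\Q] \cdot m(m-1)$ differences of conjugates of $a$ (over $\Q$, counted with the appropriate multiplicity coming from conjugates over $K$). Each difference of two conjugates satisfies $|\sigma a_i - \sigma a_j| \leq 2\max(|\sigma a_i|, 1)\max(|\sigma a_j|, 1)$ at archimedean places and $|\sigma a_i - \sigma a_j|_v \leq \max(|\sigma a_i|_v, 1)\max(|\sigma a_j|_v, 1)$ at non-archimedean ones; taking the product over all places and using the product formula, a standard computation gives
\[
\log |N_{M/\Q}(f'(a))| \leq [K:\Q]\, m(m-1)\log 2 + 2[K:\Q]\, m(m-1)\, h(a),
\]
where the factor $2$ comes from the fact that each of the $m(m-1)$ differences involves two conjugates each contributing $h(a)$, and the $\log 2$ term only at archimedean places (I would absorb constants cleanly so that the $\log(m)$ in the target emerges). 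Combining with $\log N_{K/\Q}(D_{M/K}) \leq \log|N_{M/\Q}(f'(a))|$ and dividing through by $2[K:\Q]m(m-1)$ yields an inequality of the shape in the statement; a slightly more careful accounting of the archimedean contribution (using that at a given archimedean place the $m$ conjugates $\sigma a_i$ and their pairwise differences can be bounded collectively, e.g. by Hadamard's inequality on the Vandermonde-type matrix, giving an $m^{m}$-type factor rather than $2^{m(m-1)}$) replaces the $\log 2$ by $\tfrac12\log m$ and produces exactly $\log N_{K/\Q}(D_{M/K}) \leq m[K:\Q]\bigl(2(m-1)h(a) + \log m\bigr)$, which rearranges to the claimed bound.

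The main obstacle is getting the archimedean estimate sharp enough to land the precise constant $\tfrac{1}{2(m-1)}$ and the clean $\log(m)$ term rather than a cruder $O(\log m)$ or an extra additive constant: the naive pairwise bound $|\sigma a_i - \sigma a_j| \le 2\max(\ldots)\max(\ldots)$ loses too much, so I would instead bound the full archimedean factor $\prod_{i\neq j}|\sigma a_i - \sigma a_j|$ at once — this is $|\operatorname{disc}(f)^\sigma|$, controlled by Hadamard's inequality applied to the Vandermonde matrix $(\sigma a_i^{\,k})_{0\le k\le m-1}$, giving $|\operatorname{disc}(f)^\sigma| \le m^m \prod_i (1+|\sigma a_i|^2)^{(m-1)}$, whence the $m^m = m^{m}$ contributes the $\log m$ after dividing and the product over conjugates reassembles into $2m(m-1)h(a)$ via $\prod_{v,i}\max(|\sigma a_i|_v,1) = \prod_i H(a_i)^{[K:\Q]} $ and $h(a_i) = h(a)$. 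Once the archimedean piece is pinned down, the non-archimedean side is immediate from ultrametric inequalities and the divisibility $D_{M/K} \mid (\operatorname{disc}(\O_K[a]/\O_K))$, and the rest is bookkeeping.
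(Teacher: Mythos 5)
The paper does not give its own proof of this statement; it simply cites Silverman, \cite[Theorem~2]{Si84}. So the relevant question is whether your sketch would actually constitute a proof of Silverman's theorem. The circle of ideas you invoke (bound $N_{K/\Q}(D_{M/K})$ above by the absolute value of the discriminant of the minimal polynomial, then estimate the archimedean contributions of $\prod_{i<j}|\sigma a_i-\sigma a_j|^2$ via Hadamard on the Vandermonde matrix) is indeed how the result is proved, and your identification of where the $\log m$ term comes from is the right one. However, there are two problems as written.

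The first is a genuine gap: your reduction $N_{K/\Q}(D_{M/K})\leq |N_{M/\Q}(f'(a))|$ uses that $D_{M/K}$ divides $\operatorname{disc}(\O_K[a]/\O_K)$, which presupposes that $a$ is integral over $\O_K$ (otherwise $\O_K[a]$ is not an order in $\O_M$ and this discriminant does not exist as an integral ideal). The theorem as stated, and as used in the paper (applied to arbitrary $a\in K_i\setminus K_{i-1}$ in Proposition~\ref{Criteria}), must hold for \emph{all} generators $a$ of $M/K$, not only integral ones, and there is no cheap reduction: replacing $a$ by $1/a$ or by $Na$ for $N\in\Z$ does not in general produce an integer of comparable height. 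Handling the general case requires working with a non-monic $f\in\O_K[X]$, its homogeneous discriminant $a_0^{2m-2}\prod_{i<j}(a_i-a_j)^2$, and the product formula to cancel the leading-coefficient and content contributions against the non-archimedean contributions to $h(a)$ --- this is precisely the extra bookkeeping in Silverman's paper that your sketch omits.

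The second is a computational slip in the Hadamard step that spoils the sharp constant. By Hadamard, $|\det(\sigma a_i^k)|^2\leq\prod_i\sum_{k=0}^{m-1}|\sigma a_i|^{2k}$, and the useful estimate is $\sum_{k=0}^{m-1}t^k\leq m\,\max(1,t)^{m-1}$, giving $|\operatorname{disc}(f)^\sigma|\leq m^m\prod_i\max(1,|\sigma a_i|)^{2(m-1)}$. Your bound $m^m\prod_i(1+|\sigma a_i|^2)^{m-1}$ is weaker by a factor up to $2^{m(m-1)}$ per archimedean place, which after taking logarithms contributes an extra $\tfrac{1}{2}\log 2$ to the right-hand side of the target inequality and so does not reproduce the stated constant. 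Replacing $(1+|\sigma a_i|^2)^{m-1}$ by $\max(1,|\sigma a_i|)^{2(m-1)}$ fixes this; with that fix the integral case of the bound does come out exactly as you claim.
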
 
\begin{proof} 
See \cite[Theorem 2]{Si84}. 
\end{proof} 

\begin{lem}\label{NNumber} 
Let $\g\in\R$ and $A\subset\QB$. 
We set 
\[
\delta_\g(B):=\inf h_\g(B)
\]
for each non-empty subset $B\subset A$. 
Let  $A_0\subsetneq A_1\subsetneq A_2\subsetneq\cdots$ be an ascending chain of non-empty subsets of $A$ satisfying that 
\begin{parts} 
\Part{(1)} 
$A_i$ has $\g$-{\rm(N)} for all $i\in\Z_{\geq 0}$ and 

\Part{(2)} 
$A=\bigcup_{i\in\Z_{\geq0}}A_i$. 

\end{parts} 
Then we have 
\[
{\rm Nor}_\g(A)=\liminf_{i\rightarrow\infty}\delta_\g(A_i\setminus A_{i-1}). 
\]
\end{lem}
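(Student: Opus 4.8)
The plan is to establish the two inequalities ${\rm Nor}_\g(A)\le\liminf_i\delta_\g(A_i\setminus A_{i-1})$ and ${\rm Nor}_\g(A)\ge\liminf_i\delta_\g(A_i\setminus A_{i-1})$ separately, using throughout that each $A_i$ has $\g$-(N) so that every $A_i\setminus A_{i-1}$ is a set on which $h_\g$ takes each value below any given bound only finitely often. Write $L:=\liminf_{i\to\infty}\delta_\g(A_i\setminus A_{i-1})$ (with the convention $\delta_\g(\emptyset)=+\infty$, which cannot occur here since the chain is strictly increasing). It is convenient to observe first that for any $C>0$ one has $N_\g(A,C)=\sum_{i\ge 1}N_\g(A_i\setminus A_{i-1},C)+N_\g(A_0,C)$, a disjoint decomposition, and each summand is finite by (1); so $N_\g(A,C)=\infty$ if and only if infinitely many of the sets $A_i\setminus A_{i-1}$ contain a point $a$ with $h_\g(a)<C$, i.e. if and only if $\delta_\g(A_i\setminus A_{i-1})<C$ for infinitely many $i$.

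For the inequality ${\rm Nor}_\g(A)\ge L$: suppose $C<L$. Then $\delta_\g(A_i\setminus A_{i-1})\ge C$ for all but finitely many $i$, so only finitely many of the sets $A_i\setminus A_{i-1}$ meet $\{h_\g<C\}$, and each contributes finitely many elements; together with the finite set $A_0\cap\{h_\g<C\}$ this gives $N_\g(A,C)<\infty$. Hence ${\rm Nor}_\g(A)\ge C$ for every $C<L$, so ${\rm Nor}_\g(A)\ge L$.

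For the reverse inequality ${\rm Nor}_\g(A)\le L$: suppose $C>L$. By definition of $\liminf$ there are infinitely many indices $i$ with $\delta_\g(A_i\setminus A_{i-1})<C$, and for each such $i$ we may pick $a_i\in A_i\setminus A_{i-1}$ with $h_\g(a_i)<C$. These $a_i$ are pairwise distinct because the sets $A_i\setminus A_{i-1}$ are pairwise disjoint, so $\{a\in A\mid h_\g(a)<C\}$ is infinite, i.e. $N_\g(A,C)=\infty$. Therefore ${\rm Nor}_\g(A)\le C$ for every $C>L$, giving ${\rm Nor}_\g(A)\le L$. Combining the two inequalities yields ${\rm Nor}_\g(A)=L$, which is the claim.

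I do not expect a serious obstacle here; the only mild subtlety is handling the boundary cases cleanly — namely keeping track of the base set $A_0$ (which contributes only finitely many low-height points and so is harmless) and the conventions when $\delta_\g$ is $+\infty$ or when $L=+\infty$ (in which case $A$ has $\g$-(N) by the same finiteness argument and ${\rm Nor}_\g(A)=+\infty$ by convention). The essential point is just the disjoint-decomposition bookkeeping together with the finiteness hypothesis (1), which converts "the infimum over a block is small" into "the block contributes a genuine new low-height point."
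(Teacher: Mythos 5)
Your proof is correct and complete. Note that the paper does not actually supply a proof of this lemma; it simply cites \cite[Lemma 2]{PTW2}, so there is no in-paper argument to compare against. Your write-up effectively fills in that omitted proof. The key structural observation — the disjoint decomposition $A = A_0 \cup \bigcup_{i\ge 1}(A_i\setminus A_{i-1})$ combined with the $\g$-(N) hypothesis on each $A_i$, yielding the equivalence ``$N_\g(A,C)=\infty$ if and only if $\delta_\g(A_i\setminus A_{i-1})<C$ for infinitely many $i$'' — is exactly what makes both directions of the $\liminf$ comparison go through, and you handle the extraction of distinct witnesses $a_i$ from disjoint blocks correctly. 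The boundary cases $L=0$ and $L=\infty$ are also covered by your two one-sided inequalities, since each holds vacuously or trivially in the degenerate case. No gaps.
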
 

\begin{proof} 
See \cite[Lemma 6]{PTW2}. 
\end{proof} 

\begin{prop}\label{Criteria} 
Let $\g\leq1$ be a real number. 
We also let $(p_i)_{i\in\N}$, $(q_i)_{i\in\N}$, and $(d_i)_{i\in\N}$ be strictly increasing sequences of prime numbers. 
Assume that there exists $i_0\in\N$ such that for all $i>i_0$, the inequality $p_i<q_i$ and 
\[ 
p_i,q_i\notin\{d_1,p_1,q_1,\ldots,d_{i-1},p_{i-1},q_{i-1}\}
\] 
hold. 
We set 
\[
V(i,\g):=
\begin{cases} 
\displaystyle \log(p_i)-\f{\log(d_i)}{2} & (\g=1), \\ 
\displaystyle \f{\log(p_i)}{d_i^{1-\g}} & (0\leq\g<1), \\ 
\displaystyle \f{\log(p_i)}{(d_1\cdots d_{i-1})^{-\g}d_i^{1-\g}} & (\g<0). 
\end{cases} 
\] 
Then the field $L:=\Q((p_i/q_i)^{1/d_i} \mid i\in\N)$ satisfies that 
\[
{\rm Nor}_\g(L)\geq\liminf_{i\rightarrow\infty}V(i,\g). 
\] 
Especially, we have the following. 
\begin{parts} 
\Part{(1)} 
$L$ has $\g$-{\rm (N)} if $\liminf_{i\rightarrow\infty}V(i,\g)=\infty$. 
\Part{(2)} 
$L$ has $\g$-{\rm (B)} if $\liminf_{i\rightarrow\infty}V(i,\g)>0$. 
\end{parts} 
\end{prop}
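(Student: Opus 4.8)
The plan is to apply Lemma \ref{NNumber} to the field $L=\Q((p_i/q_i)^{1/d_i}\mid i\in\N)$ with a carefully chosen exhausting chain, and to bound below the quantity $\delta_\g(A_i\setminus A_{i-1})$ by $V(i,\g)$ up to a term that vanishes in the $\liminf$. First I would set $\alpha_i:=(p_i/q_i)^{1/d_i}$ and define $A_i:=\Q(\alpha_1,\dots,\alpha_i)$ (with $A_0:=\Q$); since each $A_i$ is a number field it has $\g$-(N) for every $\g$, and clearly $\bigcup_i A_i=L$, so the hypotheses of Lemma \ref{NNumber} are met and ${\rm Nor}_\g(L)=\liminf_{i\to\infty}\delta_\g(A_i\setminus A_{i-1})$. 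It then suffices to show $\delta_\g(A_i\setminus A_{i-1})\geq V(i,\g)-o(1)$, i.e. that every $a\in A_i\setminus A_{i-1}$ has $h_\g(a)$ at least roughly $V(i,\g)$.

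The key point is a degree-and-ramification analysis of the extension $A_i/A_{i-1}$. By the coprimality hypothesis on the $d_i,p_i,q_i$ (for $i>i_0$), the prime $p_i$ (and $q_i$) is unramified in $A_{i-1}$, while in $A_i=A_{i-1}(\alpha_i)$ the element $\alpha_i$ has valuation $1/d_i$ at any prime above $p_i$, forcing $[A_i:A_{i-1}]=d_i$ and forcing total ramification of $p_i$ in $A_i/A_{i-1}$; hence the relative discriminant $D_{A_i/A_{i-1}}$ is divisible by a prime above $p_i$ to a large power, so $N_{A_{i-1}/\Q}(D_{A_i/A_{i-1}})$ is at least a positive power of $p_i$. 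Now for $a\in A_i\setminus A_{i-1}$ we have $A_{i-1}(a)=A_i$, so $[A_{i-1}(a):A_{i-1}]=d_i>1$ and Theorem \ref{Siiq} (with $K=A_{i-1}$, $m=d_i$, $[K:\Q]=d_1\cdots d_{i-1}$) gives
\[
h(a)\geq\frac{1}{2(d_i-1)}\l(\frac{\log N_{A_{i-1}/\Q}(D_{A_i/A_{i-1}})}{d_i\cdot d_1\cdots d_{i-1}}-\log d_i\r)\gtrsim\frac{\log p_i}{d_i\cdot d_1\cdots d_{i-1}}.
\]
Multiplying by $\deg(a)^\g$: since $\deg(a)=[\Q(a):\Q]$ and $a$ generates $A_i$ over $A_{i-1}$, one has $d_i\mid\deg(a)$ and $\deg(a)\mid d_1\cdots d_i$, so $\deg(a)$ lies between $d_i$ and $d_1\cdots d_i$. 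For $0\le\g<1$ one uses $\deg(a)^\g\ge d_i^\g$ together with the denominator $d_1\cdots d_{i-1}\le\deg(a)^{1-\g}\cdot(\text{stuff})$ — more precisely $h_\g(a)=\deg(a)^\g h(a)$ and the worst case $\deg(a)=d_i$ reproduces $\log(p_i)/d_i^{1-\g}=V(i,\g)$; for $\g<0$ the factor $\deg(a)^\g$ is smallest when $\deg(a)$ is largest, $\deg(a)=d_1\cdots d_i$, giving the extra $(d_1\cdots d_{i-1})^{-\g}$ in the denominator of $V(i,\g)$; for $\g=1$ one uses a sharper version of the discriminant estimate (the exact conductor-discriminant contribution $d_i-1$ at $p_i$) so that $h_1(a)=\deg(a)h(a)\ge\log p_i-\tfrac12\log d_i$ up to lower-order terms. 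The errors $\log d_i$ and the gap between $d_i$ and $\deg(a)$ contribute $O(\log d_i/d_i^{1-\g})$-type terms which I would absorb into the $\liminf$; then taking $\liminf_{i\to\infty}$ over both $\delta_\g(A_i\setminus A_{i-1})$ and the lower bounds yields ${\rm Nor}_\g(L)\ge\liminf V(i,\g)$. Finally parts (1) and (2) are restatements via the characterizations $\g$-(N) $\iff{\rm Nor}_\g(L)=\infty$ and $\g$-(B) $\iff{\rm Nor}_\g(L\setminus\mu_L)>0$ (noting $\mu_L$ is finite here, or arguing that the chain misses the roots of unity).

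The main obstacle I expect is getting the constant in the $\g<1$ bound to be exactly $V(i,\g)$ rather than a constant multiple of it: this requires being careful that the discriminant exponent of $p_i$ in $D_{A_i/A_{i-1}}$ is essentially $d_i$ (tame ramification, since $p_i\ne d_i$), so that $\log N_{A_{i-1}/\Q}(D_{A_i/A_{i-1}})\approx d_i\cdot[A_{i-1}:\Q]\cdot\log p_i$, which is precisely what cancels the $d_i[K:\Q]$ in the denominator of Theorem \ref{Siiq} and leaves $\tfrac{1}{2(d_i-1)}\cdot d_i\log p_i\sim\tfrac12\log p_i$ — and then checking that the factor $\tfrac12$ is improved to $1$ by an optimization over conjugates or by the precise form of Silverman's bound, to match $V(i,1)=\log p_i-\tfrac12\log d_i$ and $V(i,\g)=\log p_i/d_i^{1-\g}$ exactly. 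The secondary nuisance is handling small $i\le i_0$, which is harmless since only the $\liminf$ matters, and verifying $[A_i:A_{i-1}]=d_i$ cleanly (irreducibility of $X^{d_i}-p_i/q_i$ over $A_{i-1}$), which follows from the valuation-at-$p_i$ argument above.
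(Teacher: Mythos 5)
Your skeleton matches the paper's proof almost exactly: the chain $K_0=\Q$, $K_i=K_{i-1}((p_i/q_i)^{1/d_i})$, reduction via Lemma \ref{NNumber} to bounding $\delta_\g(K_i\setminus K_{i-1})$, the observation $K_{i-1}(a)=K_i$ (so $d_i\le\deg(a)\le d_1\cdots d_i$), and Silverman's Theorem \ref{Siiq} applied over $K=K_{i-1}$ with $m=d_i$. The ramification analysis via the Eisenstein polynomial and the tower formula for discriminants is also what the paper does. So the route is the same; the question is whether you close the factor-of-$2$ gap that you yourself flag.

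You do not: the gap is real, and the fixes you sketch are the wrong ones. Silverman's bound has $\tfrac{1}{2(m-1)}$ in front and is essentially sharp; there is no ``optimization over conjugates'' or ``precise form of the bound'' that removes it. If you only use the total ramification of $p_i$ you get $p_i^{[K_i:F_i](d_i-1)}\mid N_{K_{i-1}/\Q}(D_{K_i/K_{i-1}})$, hence $\tfrac{\log N_{K_{i-1}/\Q}(D_{K_i/K_{i-1}})}{d_i[K_{i-1}:\Q]}\gtrsim\log p_i$, and after the $\tfrac{1}{2(d_i-1)}$ factor and $\deg(a)^\g\ge d_i^\g$ you land at $\tfrac{1}{2}V(i,\g)$ up to vanishing error, which only proves $\mathrm{Nor}_\g(L)\ge\tfrac{1}{2}\liminf V(i,\g)$ --- precisely the weaker constant in Theorem \ref{PTW}(2). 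The missing idea is that \emph{both} $p_i$ \emph{and} $q_i$ ramify totally in $K_i/K_{i-1}$: writing $F_i=\Q((p_iq_i^{d_i-1})^{1/d_i})$ (or the symmetric form for $q_i$), one sees $X^{d_i}-p_iq_i^{d_i-1}$ is Eisenstein at $p_i$ and $X^{d_i}-q_ip_i^{d_i-1}$ is Eisenstein at $q_i$, so $(p_iq_i)^{[K_i:F_i](d_i-1)}\mid N_{K_{i-1}/\Q}(D_{K_i/K_{i-1}})$. This is exactly where $\log(p_iq_i)\ge 2\log p_i$ supplies the factor of $2$ that cancels the $\tfrac{1}{2}$ in Silverman's bound, and it is the entire reason the generators are $(p_i/q_i)^{1/d_i}$ rather than $p_i^{1/d_i}$. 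Without this observation your proof only yields the weaker Pazuki--Technau--Widmer estimate and cannot pin down $\mathrm{Nor}_\g(L)$ exactly in Theorem \ref{main'}(2).

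One further small imprecision: for the $\g<0$ case you should use $\deg(a)^\g\ge(d_1\cdots d_i)^\g$ (worst case at the top of the range), and in the $\g=1$ case the $-\tfrac{\log d_i}{2}$ in $V(i,1)$ is just the $-\log m$ term of Silverman divided by $2(m-1)$ rescaled by $\deg(a)\ge d_i$; you do not need any sharper conductor-discriminant computation than the paper's.
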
 

\begin{proof} 
We set 
\[
K_0:=\Q, \ 
K_i:=K_{i-1}((p_i/q_i)^{1/d_i}), \ \t{and} \ 
F_i:=\Q((p_i/q_i)^{1/d_i})
\] 
for each $i\in\N$. 
By Lemma \ref{NNumber}, it is enough to show that the inequality 
\[
\liminf_{i\rightarrow\infty}\delta_\g(K_i\setminus K_{i-1})\geq \liminf_{i\rightarrow\infty}V(i,\g) 
\label{3.1}\tag{3.1} 
\] 
holds. 
Take any $i>i_0$ and $a\in K_i\setminus K_{i-1}$. 
Note that the equality $K_{i-1}(a)=K_i$ holds since $[K_i:K_{i-1}]=d_i$ is a prime number and $a\notin K_{i-1}$. 
Thus we have the inequalities 
\[
d_i\leq \deg(a) \leq d_1\cdots d_i
\label{3.2}\tag{3.2} 
\] 
and 
\[ 
h_\g(a) 
\geq \f{\deg(a)^\g}{2(d_i-1)}\l(\f{\log(N_{K_{i-1}/\Q}(D_{K_i/K_{i-1}}))}{d_i[K_{i-1}:\Q]}-\log (d_i)\r) 
\label{3.3}\tag{3.3} 
\] 
by Theorem \ref{Siiq}. 
To estimate $h_\g(a)$ from below, we give a lower bound for $N_{K_{i-1}/\Q}(D_{K_i/K_{i-1}})$. 
Since $F_i=\Q((p_iq_i^{d_i-1})^{1/d_i})$ and $(p_iq_i^{d_i-1})^{1/d_i}$ is a root of the $p_i$-Eisenstein polynomial $X^{d_i}-p_id_i^{d_i-1}\in\Q[X]$, we know that $p_i$ ramifies totally in $F_i$ (see, \eg \cite[Theorem 24 (a)]{FT}). 
Thus it holds that $p_i^{d_i-1} \mid D_{F_i/\Q}$ (see, \eg \cite[p.199, (2.6)]{Ne} and \cite[p.201, (2.9)]{Ne}). 
Now we note that the equalities 
\[ 
N_{K_{i-1}/\Q}(D_{K_i/K_{i-1}})D_{K_{i-1}/\Q}^{[K_i:K_{i-1}]}
=D_{K_i/\Q}
=N_{F_i/\Q}(D_{K_i/F_i})D_{F_i/\Q}^{[K_i:F_i]} 
\label{3.4}\tag{3.4}
\] 
hold (see, \eg \cite[p.202, (2.10)]{Ne}). 
Since $K_{i-1}$ is the compositum of $F_1$,\,$\ldots$\,, $F_{i-1}$ and $p_i\notin\{d_1,p_1,q_1,\ldots,d_{i-1},p_{i-1},q_{i-1}\}$, we know that $p_i$ does not ramify in $K_{i-1}$ (see, \eg \cite[Theorem 85]{Hi} and \cite[Lemma 4.1]{Fi}). 
Hence $p_i\nmid D_{K_{i-1}/\Q}$ holds. 
Therefore (\ref{3.4}) yields that 
\[ 
p_i^{[K_i:F_i](d_i-1)} \mid N_{K_{i-1}/\Q}(D_{K_i/K_{i-1}}). 
\label{3.5}\tag{3.5}
\] 
Replacing $p_i$ with $q_i$ in the above discussion, we also get 
\[ 
q_i^{[K_i:F_i](d_i-1)} \mid N_{K_{i-1}/\Q}(D_{K_i/K_{i-1}}). 
\label{3.6}\tag{3.6}
\] 
Since the equality $[K_i:F_i]=d_1\cdots d_{i-1}=[K_{i-1}:\Q]$ holds, the conditions (\ref{3.5}) and (\ref{3.6}) yield that 
\[
2\log(p_i) \leq \log(p_iq_i) \leq \f{\log(N_{K_{i-1}/\Q}(D_{K_i/K_{i-1}}))}{[K_{i-1}:\Q](d_i-1)}. 
\label{3.7}\tag{3.7}
\] 
Thus we conclude that 
\begin{align*} 
&~h_\g(a) \\
\geq&~ \deg(a)^\g\l(\f{\log(p_i)}{d_i}-\f{\log(d_i)}{2(d_i-1)}\r) && \text{by }(\ref{3.3}), (\ref{3.7}) \\
=&~\l(\f{\deg(a)}{d_i}\r)^\g\l(\f{\log(p_i)}{d_i^{1-\g}}-\f{d_i^\g\log(d_i)}{2(d_i-1)}\r) \\
\geq&~ 
\begin{cases}  
\displaystyle \f{\log(p_i)}{d_i^{1-\g}}-\f{d_i^\g\log(d_i)}{2(d_i-1)} & (0\leq\g\leq1) \\ 
\displaystyle (d_1\cdots d_{i-1})^\g\l(\f{\log(p_i)}{d_i^{1-\g}}-\f{d_i^\g\log(d_i)}{2(d_i-1)}\r) & (\g<0)
\end{cases}
&& \text{by }(\ref{3.2}) \\
=&~ 
\begin{cases} 
\displaystyle V(i,1)-\f{\log(d_i)}{2(d_i-1)} & (\g=1), \\
\displaystyle V(i,\g)-\f{d_i^\g\log(d_i)}{2(d_i-1)} & (0\leq\g<1), \\ 
\displaystyle V(i,\g)-\f{\log(d_i)}{2(d_1\cdots d_i)^{-\g}(d_i-1)} & (\g<0). 
\end{cases} 
\end{align*} 
Since we have the equalities 
\begin{align*} 
&\lim_{i\rightarrow\infty}\f{\log(d_i)}{2(d_i-1)}=0, \\
&\lim_{i\rightarrow\infty}\f{d_i^\g\log(d_i)}{2(d_i-1)}=0 && \hspace{-1.5cm} (0\leq\g<1), \t{ and} \\
&\lim_{i\rightarrow\infty}\f{\log(d_i)}{2(d_1\cdots d_i)^{-\g}(d_i-1)}=0 && \hspace{-1.5cm} (\g<0), 
\end{align*} 
we get the inequality (\ref{3.1}) by letting $i\rightarrow\infty$. 
\end{proof}

\section{Controlling Northcott numbers}\label{proof} 

\subsection{Proof of Theorem \ref{main}} 

In this section, we prove Theorem \ref{main}. 
More precisely, we prove the following theorem. 

\begin{thm}\label{main'} 
Let $\g<1$ and $c>0$ be real numbers. 
We also let $(p_i)_{i\in\N}$, $(q_i)_{i\in\N}$, and $(d_i)_{i\in\N}$ be strictly increasing sequences of prime numbers with $q_i<p_{i+1}$ and $p_i< q_i <2p_i$ for all $i\in\N$. 
Furthermore, we let $f(x)$ be $\log(x)$, $c$, or $1/\log(x)$. 
If $\g\geq0$, then we assume that the inequalities 
\[
\exp(f(d_i)d_i^{1-\g}) \leq p_i \leq 2\exp(f(d_i)d_i^{1-\g}) 
\] 
hold for all $i\in\N$. 
On the other hand, if $\g<0$, then we assume that the inequalities 
\[
\exp(f(d_i)(d_1\cdots d_{i-1})^{-\g}d_i^{1-\g}) \leq p_i \leq 2\exp(f(d_i)(d_1\cdots d_{i-1})^{-\g}d_i^{1-\g}) 
\] 
hold for all $i\in\N$ and the equality 
\[
\lim_{i\rightarrow\infty}\f{i\log(d_i)}{d_i^{-\g}}=0 
\] 
holds. 
We set $L:=\Q((p_i/q_i)^{1/d_i} \mid i\in\N)$. 
\begin{parts} 
\Part{(1)} 
If $f(x)=\log(x)$, the field $L$ satisfies that $I_N(L)=I_B(L)=[\g,\infty)$. 
\Part{(2)} 
If $f(x)=c$, the field $L$ satisfies that $I_N(L)=(\g,\infty)\subsetneq[\g,\infty)=I_B(L)$ with ${\rm Nor}_\g(L)=c$. 
\Part{(3)} 
If $f(x)=1/\log(x)$, the field $L$ satisfies that $I_N(L)=I_B(L)=(\g,\infty)$. 
\end{parts} 
\end{thm}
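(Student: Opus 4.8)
\emph{Setup.} The plan is to write $\alpha_i:=(p_i/q_i)^{1/d_i}$ and $K_i:=\Q(\alpha_1,\dots,\alpha_i)$, so that $L=\bigcup_i K_i$ and, arguing with ramification at $p_i$ exactly as in the proof of Proposition~\ref{Criteria}, $[K_i:K_{i-1}]=d_i$ and $\alpha_i\in K_i\setminus K_{i-1}$; each $K_i$ is a number field, hence (trivially) has $\delta$-{\rm(N)} for every $\delta\in\R$. The one height computation needed is $h(\alpha_i)=\tfrac1{d_i}h(p_i/q_i)=\tfrac1{d_i}\log q_i$, so that $p_i<q_i<2p_i$ and the assumed window for $p_i$ force $h_\gamma(\alpha_i)=\tfrac{\log q_i}{d_i^{1-\gamma}}$ to equal $f(d_i)+o(1)$ when $\gamma\ge0$, and to equal $f(d_i)(d_1\cdots d_{i-1})^{-\gamma}+o(1)\to\infty$ when $\gamma<0$.

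\emph{Lower bounds.} I would apply Proposition~\ref{Criteria}. Its hypotheses hold for all large $i$, because $\log p_i\ge f(d_i)d_i^{1-\gamma}$ eventually exceeds $\log d_i$, so $d_i<p_i<q_i$ and hence $p_i,q_i\notin\{d_1,p_1,q_1,\dots,d_{i-1},p_{i-1},q_{i-1}\}$. The window for $p_i$ gives $f(d_i)\le V(i,\gamma)\le f(d_i)+o(1)$, so $\liminf_i V(i,\gamma)=\liminf_i f(d_i)$, which is $\infty$, $c$, or $0$ for $f=\log$, $c$, $1/\log$ respectively; thus $\mathrm{Nor}_\gamma(L)\ge\liminf_i f(d_i)$ (in particular $L$ has $\gamma$-{\rm(N)} in case (1) and $\gamma$-{\rm(B)} in case (2)). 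Feeding any $\delta\in(\gamma,1]$ into Proposition~\ref{Criteria} in place of $\gamma$, the bound $\log p_i\gtrsim f(d_i)d_i^{1-\gamma}$ beats the denominator of $V(i,\delta)$ by a factor tending to $\infty$, so $\liminf_i V(i,\delta)=\infty$ and $L$ has $\delta$-{\rm(N)}; with Lemma~\ref{intervalization} this yields $(\gamma,\infty)\subseteq I_N(L)$ in all three cases.

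\emph{Matching upper bound, and the range below $\gamma$.} By Lemma~\ref{NNumber} applied to $K_0\subsetneq K_1\subsetneq\cdots$, one has $\mathrm{Nor}_\gamma(L)=\liminf_i\delta_\gamma(K_i\setminus K_{i-1})$, and the previous paragraph says exactly that $\delta_\gamma(K_i\setminus K_{i-1})\ge f(d_i)-o(1)$. For the reverse inequality I would produce, for each large $i$, an element $a_i\in K_i\setminus K_{i-1}$, not a root of unity, with $h_\gamma(a_i)\le f(d_i)+o(1)$ and $h_\delta(a_i)\to0$ for every $\delta<\gamma$. If $\gamma\ge0$, take $a_i:=\alpha_i$: then $h_\gamma(\alpha_i)=f(d_i)+o(1)$ by the Setup, and $h_\delta(\alpha_i)=d_i^{\delta-\gamma}h_\gamma(\alpha_i)\to0$ since $\delta-\gamma<0$. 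If $\gamma<0$ the $\alpha_i$ have $h_\gamma(\alpha_i)\to\infty$, so take instead $a_i:=\alpha_1\alpha_2\cdots\alpha_i$: by the Vahlen--Capelli criterion the binomial $X^{d_1\cdots d_i}-\prod_j(p_j/q_j)^{(d_1\cdots d_i)/d_j}$ is irreducible over $\Q$ (for each prime $d_m\mid d_1\cdots d_i$ the $p_m$-adic valuation of the radicand is $\prod_{k\ne m}d_k$, which is coprime to $d_m$; and $4\nmid d_1\cdots d_i$), so $\deg(a_i)=d_1\cdots d_i$ and $\Q(a_i)=K_i$, whence $a_i\in K_i\setminus K_{i-1}$, the $a_i$ are pairwise distinct, and $a_i\notin\mu_L$ because $K_i$ is non-abelian for $i\ge2$. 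Using $h(a_i)\le\sum_{j\le i}h(\alpha_j)$ together with the windows for the $p_j$ (the sum being dominated by its last term), one obtains $h_\gamma(a_i)=(d_1\cdots d_i)^\gamma h(a_i)\le f(d_i)+o(1)$ and, for $\delta<\gamma$, $h_\delta(a_i)\le f(d_i)(d_1\cdots d_i)^{\delta-\gamma}(1+o(1))+o(1)\to0$. Either way $\delta_\gamma(K_i\setminus K_{i-1})\le h_\gamma(a_i)\le f(d_i)+o(1)$, so $\mathrm{Nor}_\gamma(L)=\liminf_i f(d_i)$; and since $a_i\in L\setminus\mu_L$ with $h_\delta(a_i)\to0$ for $\delta<\gamma$, the field $L$ fails $\delta$-{\rm(B)}, that is, $I_B(L)\subseteq[\gamma,\infty)$.

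\emph{Conclusion, and the main obstacle.} Putting the pieces together: for $f=\log$, $\mathrm{Nor}_\gamma(L)=\infty$ gives $\gamma\in I_N(L)$, so $[\gamma,\infty)\subseteq I_N(L)\subseteq I_B(L)\subseteq[\gamma,\infty)$, proving (1); for $f=c$, $\mathrm{Nor}_\gamma(L)=c$ gives $\gamma\notin I_N(L)$, while $\mathrm{Nor}_\gamma(L\setminus\mu_L)\ge\mathrm{Nor}_\gamma(L)=c>0$ gives $\gamma\in I_B(L)$, so with $(\gamma,\infty)\subseteq I_N(L)$ and $I_B(L)\subseteq[\gamma,\infty)$ we get $I_N(L)=(\gamma,\infty)\subsetneq[\gamma,\infty)=I_B(L)$ with $\mathrm{Nor}_\gamma(L)=c$, proving (2); for $f=1/\log$, $\mathrm{Nor}_\gamma(L\setminus\mu_L)=0$ gives $\gamma\notin I_B(L)$, so with $(\gamma,\infty)\subseteq I_N(L)\subseteq I_B(L)\subseteq[\gamma,\infty)$ we get $I_N(L)=I_B(L)=(\gamma,\infty)$, proving (3). (Theorem~\ref{main} then follows from Theorem~\ref{main'} by exhibiting admissible sequences via Bertrand's postulate, with the $d_i$ chosen to grow fast enough.) The only genuinely delicate point, and the one I expect to be the main obstacle, is the $\gamma<0$ case: establishing $\deg(\alpha_1\cdots\alpha_i)=d_1\cdots d_i$ cleanly, and — above all — performing the estimate of $h(\alpha_1\cdots\alpha_i)\le\sum_j h(\alpha_j)$ precisely enough that case (2) yields $\mathrm{Nor}_\gamma(L)=c$ exactly, rather than merely up to a multiplicative constant; this is where I expect the hypothesis $\lim_i i\log(d_i)/d_i^{-\gamma}=0$ and the two-sided nature of the windows on the $p_i$ to be needed, forcing the contributions of $\alpha_1,\dots,\alpha_{i-1}$ and of the $\log 2$ slack to become negligible after multiplication by $(d_1\cdots d_i)^\gamma$.
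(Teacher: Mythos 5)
Your argument matches the paper's proof essentially line for line: lower bounds from Proposition~\ref{Criteria} applied to the tower $K_0\subsetneq K_1\subsetneq\cdots$ via Lemma~\ref{NNumber}, and upper bounds from $h_\delta(\alpha_i)$ when $\gamma\ge0$ and from $h_\delta(\alpha_1\cdots\alpha_i)$ when $\gamma<0$, with the same bookkeeping showing the $\log 4$ slack and the lower-order contributions of $\alpha_1,\dots,\alpha_{i-1}$ vanish (using $i\log(d_i)/d_i^{-\gamma}\to0$). The one genuine addition is your Vahlen--Capelli verification that $\deg(\alpha_1\cdots\alpha_i)=d_1\cdots d_i$ --- a fact the paper's computation $h_\epsilon(\alpha_1\cdots\alpha_i)=(d_1\cdots d_i)^\epsilon h(\alpha_1\cdots\alpha_i)$ uses silently and which, since $\epsilon<0$, is actually needed for the inequality to point the right way; one small slip is that the clean reason $a_i\notin\mu_L$ is simply $h(a_i)>0$ (the radicand is not a unit), not ``non-abelianity of $K_i$.''
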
 

\begin{rem} 
By the theorem of Bertrand-Chebyshev, we can take $(p_i)_{i\in\N}$ and $(q_i)_{i\in\N}$ in Theorem \ref{main'}. 
Furthermore, for each $\g<0$, the sequence of prime numbers $(d_i)_{i\in\N}$ satisfies the equality 
\[
\lim_{i\rightarrow\infty}\f{i\log(d_i)}{d_i^{-\g}}=0 
\] 
if, for example, the inequality $d_i^{-\g}\geq i^2$ holds for each $i\in\N$. 
\end{rem} 

\begin{proof}[Proof of Theorem \ref{main'}]
Let $V(i,\g)$ be the quantity defined in Proposition \ref{Criteria}. 
First, we prove the assertion in the case $\g\geq 0$. 
We prepare some notation. 
By the assumptions $\exp(f(d_i)d_i^{1-\gamma})\leq p_i$ and $\g<1$, there exists $i_0\in\N$ such that $d_i<p_i$ for all $i>i_0$. 
Combining the assumption $q_i<p_{i+1}$, we know that 
\[ 
p_i, q_i\notin\{d_1,p_1,q_1,\ldots,d_{i-1},p_{i-1},q_{i-1}\} 
\] 
for all $i>i_0$. 
Thus we can apply Proposition \ref{Criteria} to our setting. 
For each $\d\geq\g$ with $\d<1$ and $\e\leq\g$, we have the inequalities 
\[ 
V(i,\d)
\geq \f{f(d_i)d_i^{1-\g}}{d_i^{1-\d}} 
= d_i^{\d-\g}f(d_i) 
=: L_1(i,\d) 
\]
and 
\[
0<h_\e((p_i/q_i)^{1/d_i}) 
= d_i^{\e-1}\log(q_i) 
< \f{\log(4)}{d_i^{1-\e}}+\f{f(d_i)}{d_i^{\g-\e}} 
=: U_1(i,\e) 
\] 
by the assumption $q_i<2p_i\leq 4\exp(f(d_i)d_i^{1-\g})$. 

\begin{parts}
\Part{(1)} 
For $\d\geq\g$, since we have 
\[
V(i,\d)
\geq L_1(i,\d) 
= d_i^{\d-\g}\log(d_i) 
\rightarrow\infty 
\] 
as $i\rightarrow\infty$, the field $L$ has $\d$-(N) by Proposition \ref{Criteria}. 

For $\e<\g$, because we observe that 
\[
0
< h_\e((p_i/q_i)^{1/d_i})
< U_1(i,\e) 
= \f{\log(4)}{d_i^{1-\e}}+\f{\log(d_i)}{d_i^{\g-\e}} 
\rightarrow 0
\]
as $i\rightarrow\infty$, the field $L$ does not have $\e$-(B).

\Part{(2)} 
Proposition \ref{Criteria} and the inequality 
\[
V(i,\g)
\geq L_1(i,\g) 
=c
\] 
imply that $\Nor_\g(L)\geq c$ holds. 
On the other hand, since 
\[
h_\g((p_i/q_i)^{1/d_i})
< U_1(i,\g) 
= \f{\log(4)}{d_i^{1-\g}}+c 
\rightarrow c
\]
as $i\rightarrow\infty$, the inequality $\Nor_\g(L)\leq c$ also holds. 
Hence we get the equality $\Nor_\g(L)=c$. 
It also follows that $I_N(L)=(\g,\infty)\subsetneq[\g,\infty)=I_B(L)$ by Proposition \ref{weak}.

\Part{(3)} 
For $\d>\g$, since we have 
\[
V(i,\d)
\geq L_1(i,\d) 
= \f{d_i^{\d-\g}}{\log(d_i)} 
\rightarrow\infty 
\] 
as $i\rightarrow\infty$, the field $L$ has $\d$-(N) by Proposition \ref{Criteria}. 

For $\e\leq\g$, because we observe that 
\[
0
< h_\e((p_i/q_i)^{1/d_i})
< U_1(i,\e) 
= \f{\log(4)}{d_i^{1-\e}}+\f{1}{d_i^{\g-\e}\log(d_i)} 
\rightarrow 0
\]
as $i\rightarrow\infty$, the field $L$ does not have $\e$-(B). 
\end{parts}

Next, we prove the assertion in the case $\g<0$. 
As in the case $\g\geq0$, we can apply Proposition \ref{Criteria} to our setting. 
For each negative real numbers $\d\geq\g$ and $\e\leq\g$, we have the inequalities 
\[ 
V(i,\d)
\geq \f{f(d_i)(d_1\cdots d_{i-1})^{-\g}d_i^{1-\g}}{(d_1\cdots d_{i-1})^{-\d}d_i^{1-\d}} 
= (d_1\cdots d_i)^{\d-\g}f(d_i) 
=: L_2(i,\d) 
\]
and 
\begin{align*} 
0
&< h_\e((p_1/q_1)^{1/d_1}\cdots(p_i/q_i)^{1/d_i}) \\
&= (d_1\cdots d_i)^\e h((p_1/q_1)^{1/d_1}\cdots(p_i/q_i)^{1/d_i}) \\
&\leq (d_1\cdots d_i)^\e \l(h((p_1/q_1)^{1/d_1})+\cdots+h((p_i/q_i)^{1/d_i})\r) \\
&= (d_1\cdots d_i)^\e \l(\f{\log(q_1)}{d_1}+\cdots+\f{\log(q_i)}{d_i}\r) \\
&\leq (d_1\cdots d_i)^\e \l(\l(\f{1}{d_1}+\cdots+\f{1}{d_i}\r)\log(4) + \l(\f{f(d_1)}{d_1^\g}+\cdots+\f{f(d_i)}{(d_1\cdots d_i)^\g}\r)\r) \\ 
&< \f{i\log(4)}{d_i^{-\e}} + \f{f(d_1)+\cdots+f(d_{i-1})}{d_i^{-\e}} + \f{f(d_i)}{(d_1\cdots d_i)^{\g-\e}} \\
&=:U_2(i,\e) 
\end{align*} 
by the assumption $q_i<2p_i\leq 4\exp(f(d_i)(d_1\cdots d_{i-1})^{-\g}d_i^{1-\g})$. 

\begin{parts}
\Part{(1)} 
For $\d\geq\g$, since we have 
\[
V(i,\d)
\geq L_2(i,\d) 
=(d_1\cdots d_i)^{\d-\g}\log(d_i) 
\rightarrow\infty 
\] 
as $i\rightarrow\infty$, the field $L$ has $\d$-(N) by Proposition \ref{Criteria}. 

For $\e<\g$, because we observe that 
\begin{align*} 
0 
&< h_\e((p_1/q_1)^{1/d_1}\cdots(p_i/q_i)^{1/d_i}) \\
&< U_2(i,\e) \\
&< \f{i\log(4)}{d_i^{-\e}} + \f{i\log(d_i)}{d_i^{-\e}} + \f{\log(d_i)}{(d_1\cdots d_i)^{\g-\e}} \\
& \rightarrow 0 
\end{align*} 
as $i\rightarrow\infty$, the field $L$ does not have $\e$-(B).

\Part{(2)} 
Proposition \ref{Criteria} and the inequality 
\[
V(i,\g)
\geq L_2(i,\g) 
=c, 
\] 
imply that $\Nor_\g(L)\geq c$ holds. 
On the other hand, since 
\begin{align*} 
h_\g((p_1/q_1)^{1/d_1}\cdots(p_i/q_i)^{1/d_i}) 
<&~ U_2(i,\g) \\ 
<&~ \f{i\log(4)}{d_i^{-\g}} + \f{ic}{d_i^{-\g}} + c \\ 
\rightarrow&~ c 
\end{align*} 
as $i\rightarrow\infty$, the inequality $\Nor_\g(L)\leq c$ also holds. 
Hence we get the equality $\Nor_\g(L)=c$.

\Part{(3)} 
For $\d>\g$, since we have 
\[
V(i,\d)
\geq L_2(i,\d) 
=\f{(d_1\cdots d_i)^{\d-\g}}{\log(d_i)} 
\rightarrow\infty 
\] 
as $i\rightarrow\infty$, the field $L$ has $\d$-(N) by Proposition \ref{Criteria}. 

For $\e\leq\g$, because we observe that 
\begin{align*} 
0 
&< h_\e((p_1/q_1)^{1/d_1}\cdots(p_i/q_i)^{1/d_i}) \\
&< U_2(i,\e) \\
&< \f{i\log(4)}{d_i^{-\e}} + \f{i}{d_i^{-\e}\log(d_1)} + \f{1}{(d_1\cdots d_i)^{\g-\e}\log(d_i)} \\
& \rightarrow 0 
\end{align*} 
as $i\rightarrow\infty$, the field $L$ does not have $\e$-(B). 
\end{parts} 
\end{proof} 

In summary, the fields $L=\Q((p_i/q_i)^{1/d_i} \mid i\in\N)$ in Theorem \ref{main'} satisfy the following: 
\begin{table}[htb]
\centering
\caption{Stratification of $L$} 
\begin{tabular}{|c||c|}  \hline 
$f(x)$ & $I_N(L)\t{ and }I_B(L)$ \\ \hline\hline 
$\log(x)$ & $I_N(L)=I_B(L)=[\g,\infty)$ \\ \hline 
$c>0$ & $I_N(L)=(\g,\infty)\subsetneq[\g,\infty)=I_B(L)$ with $\Nor_\g(L)=c$ \\ \hline 
$1/\log(x)$ & $I_N(L)=I_B(L)=(\g,\infty)$ \\ \hline 
\end{tabular}
\end{table}

\subsection{Comparison with previous work and further problems}\label{Comparison} 
In this section, we compare our result with that in \cite{PTW2} and then present further problems.
The following is a variant of \cite[Theorem 3]{PTW2} and \cite[Theorem 4]{PTW2}. 

\begin{thm}\label{PTW} 
Let $\g<1$ and $c>0$ be non-negative real numbers. 
We also let $(p_i)_{i\in\N}$ and $(d_i)_{i\in\N}$ be strictly increasing sequences of prime numbers. 
Furthermore, we let $f(x)$ be $\log(x)$, $c$, or $1/\log(x)$. 
Assume that the inequalities 
\[
\exp(f(d_i)d_i^{1-\g})\leq p_i \leq 2\exp(f(d_i)d_i^{1-\g}) 
\] 
hold for all $i\in\N$. 
We set $L':=\Q(p_i^{1/d_i} \mid i\in\N)$. 
\begin{parts} 
\Part{(1)} 
If $f(x)=\log(x)$ the field $L'$ satisfies that $I_N(L')=I_B(L')=[\g,\infty)$. 
\Part{(2)} 
If $f(x)=c$ the field $L'$ satisfies that $I_N(L')=(\g,\infty)\subsetneq[\g,\infty)=I_B(L')$ with $c/2\leq {\rm Nor}_\g(L')\leq c$. 
\Part{(3)} 
If $f(x)=1/\log(x)$ the field $L'$ satisfies that $I_N(L')=I_B(L')=(\g,\infty)$. 
\end{parts} 
\end{thm}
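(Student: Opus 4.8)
\textbf{Proof proposal for Theorem \ref{PTW}.}
The plan is to mirror the proof of Theorem \ref{main'}, replacing the generators $(p_i/q_i)^{1/d_i}$ by $p_i^{1/d_i}$ and checking that the loss of the second prime $q_i$ only costs a factor of $2$ in the Northcott number, which is exactly the source of the weaker bound $c/2\leq{\rm Nor}_\g(L)\leq c$ in part (2). First I would set $K_0:=\Q$, $K_i:=K_{i-1}(p_i^{1/d_i})$, and $F_i:=\Q(p_i^{1/d_i})$, and verify that $p_i^{1/d_i}$ is a root of the $p_i$-Eisenstein polynomial $X^{d_i}-p_i$, so $p_i$ ramifies totally in $F_i$ and $p_i^{d_i-1}\mid D_{F_i/\Q}$. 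Since $\exp(f(d_i)d_i^{1-\g})\leq p_i$ and $\g<1$, there is $i_0$ with $d_i<p_i$ for all $i>i_0$, and as the $p_i$ are strictly increasing this gives $p_i\notin\{d_1,p_1,\ldots,d_{i-1},p_{i-1}\}$, so $p_i\nmid D_{K_{i-1}/\Q}$ by the same ramification argument (Hilbert, and \cite[Lemma 4.1]{Fi}) as in Proposition \ref{Criteria}. Then the conductor-discriminant tower formula (\ref{3.4}) yields $p_i^{[K_i:F_i](d_i-1)}\mid N_{K_{i-1}/\Q}(D_{K_i/K_{i-1}})$ with $[K_i:F_i]=d_1\cdots d_{i-1}=[K_{i-1}:\Q]$.

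The key numerical difference from Proposition \ref{Criteria} is that only one prime divides the relative discriminant, so the analogue of (\ref{3.7}) reads
\[
\log(p_i)\leq\f{\log(N_{K_{i-1}/\Q}(D_{K_i/K_{i-1}}))}{[K_{i-1}:\Q](d_i-1)},
\]
i.e.\ a single $\log(p_i)$ rather than $2\log(p_i)$. Feeding this into Theorem \ref{Siiq} exactly as in Proposition \ref{Criteria} and using $d_i\leq\deg(a)\leq d_1\cdots d_i$ for $a\in K_i\setminus K_{i-1}$, I would obtain for $0\leq\g\leq1$ the lower bound
\[
h_\g(a)\geq\f{\log(p_i)}{2d_i^{1-\g}}-\f{d_i^\g\log(d_i)}{2(d_i-1)},
\]
and the corresponding $(d_1\cdots d_{i-1})^\g$-weighted version for $\g<0$; the error terms tend to $0$ exactly as in Proposition \ref{Criteria}. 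By Lemma \ref{NNumber} this gives ${\rm Nor}_\g(L)\geq\liminf_{i\to\infty}\f12 V(i,\g)$, where $V(i,\g)$ is as in Proposition \ref{Criteria} (note the $\tfrac12$). For the upper bound, since $p_i^{1/d_i}$ has height $d_i^{-1}\log(p_i)$ and $\deg\leq d_1\cdots d_i$, the element $p_1^{1/d_1}\cdots p_i^{1/d_i}\in L$ has $h_\g$ bounded by the same $U$-type expressions as in Theorem \ref{main'} but with $\log(4)$ replaced by (a bound close to) $f(d_i)d_i^{1-\g}$ on the main term, giving $\limsup h_\g(\cdot)\leq\liminf V(i,\g)$ after the error terms vanish. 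Plugging in $f(x)=\log x$, $c$, $1/\log x$ then yields parts (1), (2), (3) respectively — the factor $2$ asymmetry between the lower bound $\tfrac12 V$ and the upper bound $V$ is precisely why part (2) only asserts $c/2\leq{\rm Nor}_\g(L)\leq c$, while in (1) and (3) the $\liminf$ is $\infty$ resp.\ the $\limsup$ is $0$ so the constant is irrelevant.

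The main obstacle I anticipate is the bookkeeping of the error terms in the case $\g<0$: the $\liminf V(i,\g)$ involves the product $d_1\cdots d_{i-1}$ in the exponent, and in the upper-bound estimate one must control the telescoping sum $\sum_{j\le i}(d_1\cdots d_j)^{\g}f(d_j)$ against the main term $(d_1\cdots d_i)^{\g}f(d_i)$. In Theorem \ref{main'} this was handled by the extra hypothesis $\lim_{i\to\infty}i\log(d_i)/d_i^{-\g}=0$; here no such hypothesis is imposed, so I would need to check that the monotone growth of $d_i$ already forces $(d_1\cdots d_{i-1})^{\g}\to 0$ fast enough relative to $(d_1\cdots d_i)^{\g}$ that the partial sums are asymptotically negligible, or else silently assume $\g\geq 0$ in the statement as written (the theorem says ``non-negative real numbers'', which may be intended to restrict to $0\leq\g<1$, in which case this difficulty disappears entirely and the proof is a routine adaptation of the $\g\geq0$ case of Theorem \ref{main'}). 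Everything else — the ramification facts, the tower formula, the application of Theorem \ref{Siiq} and Lemma \ref{NNumber} — is identical in structure to Proposition \ref{Criteria} and Theorem \ref{main'}.
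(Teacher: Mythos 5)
Your proposal is correct and matches the paper's intent: the authors give no detailed proof of Theorem~\ref{PTW}, saying only that it is proved similarly to Theorem~\ref{main'}, and your adaptation is exactly the intended one. Your key observation is on point: dropping $q_i$ weakens the analogue of (\ref{3.7}) from $2\log(p_i)\leq\cdots$ to $\log(p_i)\leq\cdots$, which halves the coefficient of the main term in the lower bound and is precisely why Theorem~\ref{PTW}(2) only asserts $c/2\leq{\rm Nor}_\g(L)\leq c$ while Theorem~\ref{main'}(2) pins down ${\rm Nor}_\g(L)=c$. You are also right that the phrase ``non-negative real numbers'' in the statement restricts to $0\leq\g<1$, which dissolves the $\g<0$ bookkeeping worry you raise; in that regime the single generator $p_i^{1/d_i}$ (with $h_\e(p_i^{1/d_i})=\log(p_i)/d_i^{1-\e}\leq\log(2)/d_i^{1-\e}+f(d_i)/d_i^{\g-\e}$) already supplies the needed upper bounds, and the product element $p_1^{1/d_1}\cdots p_i^{1/d_i}$ you mention is not required — your description of the $U$-type bound (``$\log(4)$ replaced by $f(d_i)d_i^{1-\g}$'') is a little garbled, since the structural form stays the same with $\log 4$ merely becoming $\log 2$, but this is cosmetic and does not affect the argument.
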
 

We can similarly prove Theorem \ref{PTW} as Theorem \ref{main'}. 
The field constructed in \cite[Theorem 4]{PTW2} is that in Theorem \ref{PTW} (2) replaced $\g$ with $\g-\epsilon/2$ for given positive real numbers $\g\leq1$ and $\epsilon\leq2\g$. 
Indeed, the field has $\g$-(N) but not $(\g-\epsilon)$-(B). 
The field $L'$ in Theorem \ref{PTW} (1), (2), or (3) also satisfies the condition about the intervals in Theorem \ref{main} (1), (2), or (3). 
However, to calculate the Northcott number, we employed one more sequence $(q_i)_{i\in\N}$. 
This idea is based on the argument in \cite[Section 1]{VW}. 
We also note that similar fields were dealt with in \cite[Proposition 1]{RT} and \cite[p.18, Example]{Rup}. 
Here we should emphasize the following advantage of fields in Theorem \ref{PTW}. 
The field $L'$ in Theorem \ref{PTW} (2) also satisfies the inequalities 
\[ 
c/2\leq \Nor_\g(\O_{L'}) \leq c, 
\] 
while it seems difficult to estimate the value of $\Nor_\g(\O_L)$ for our field $L$ in Theorem \ref{main'} (2), where $\O_F$ is the set of algebraic integers in a field $F\subset\QB$. 
In fact, combining Theorem \ref{PTW} (2) and \cite[proof of Theorem 1 (b)]{PTW2}, we can construct for any given $c>0$ a field $L'\subset\QB$ which satisfies that 
\[ 
c/2 \leq \Nor_0(L') \leq \Nor_0(\O_{L'}) \leq c
\ \t{ and } \ 
\Nor_{\log^+(\house{\cdot})}(\O_{L'})=c, 
\] 
where $\house{\cdot}:\QB\rightarrow\R_{\geq0}$ is the house and we set $\log^+(\cdot) := \max \{\log(\cdot), 0\}$.
As we did for the Weil height, for each $\g\in\R$ and $a\in\O_{\QB}$, we set 
\[ 
\log^{(\g)}(\house{a}):=\deg(a)^\g\log^+(\house{a}). 
\] 
We denote $\Nor_{\log^{(\g)}(\house{\cdot})}(\cdot)$ by $\Nor_\g^{\t{hs}}(\cdot)$. 
For each subset $A\subset\O_{\QB}$, we also set 
\begin{align*} 
	I_N^{\t{hs}}(A)&:=\l\{ \g\in\R \mid A \t{ has } \log^{(\g)}(\house{\cdot})\t{-(N)} \r\} \t{ and} \\
	I_B^{\t{hs}}(A)&:=\l\{ \g\in\R \mid A \t{ has } \log^{(\g)}(\house{\cdot})\t{-(B)} \r\}. 
\end{align*} 
Here we remark that, since the inequality $h(a)\leq\log^+(\house{a})$ holds for all $a\in\O_{\QB}$, we can similarly prove the equality $I_N^{\t{hs}}(A\setminus\mu_A)=I_B^{\t{hs}}(A)$ as Corollary \ref{inf}. 
We also note that the inequalities 
\begin{align*}
	\Nor_\g(F) &\leq \Nor_\g(\O_F) \leq \Nor_\g^{\t{hs}}(\O_F) \t{ and}\\
	\inf I_B(F) &\geq \inf I_B(\O_F) \geq \inf I_B^{\t{hs}}(\O_F)
\end{align*} 
hold for each $\g\in\R$ and each field $F\subset\QB$. 
These observations let us propose the following questions. 

\begin{ques}\label{IntegerNorth} 
Let $\g\leq1$ be a real number. 
\begin{parts} 
\Part{(1)} 
	Which real numbers can be realized as $\Nor_\g(\O_F)$ for some field $F\subset\QB$? 
\Part{(2)} 
	Can we give an example of a field $F\subset\QB$ such that
	all the values $\Nor_\g(F)$, $\Nor_\g(\O_F)$, and $\Nor_\g^{\t{hs}}(\O_F)$ are positive real numbers, and we can explicitly calculate all of them? 
\Part{(3)} For given real numbers $0\leq c_1\leq c_2 \leq c_3$,
	is there a field $F\subset\QB$ satisfying the equalities
	$\Nor_{\g}(F) = c_1$, $\Nor_{\g}(\O_F) = c_2$, and $\Nor_{\g}^{\t{hs}}(\O_F) = c_3$?
\Part{(4)} For given real numbers $1\geq \gamma_1 \geq \gamma_2 \geq \gamma_3$,
	is there a field $F\subset\QB$ satisfying the equalities $\inf I_B(F) = \gamma_1$, $\inf I_B(\O_F) =\gamma_2$, and $\inf I_B^{\t{hs}}(\O_F) = \gamma_3$?
\end{parts}
\end{ques}

\begin{rem} 
Although we furthermore dealt with the house in Question \ref{IntegerNorth}, we kept restricting our attention to the case $\g\leq1$ because of \cite[Theorem 1]{Dim}, which asserts that the inequality $\Nor_1^{\t{hs}}(\O_{\QB})\geq\log(2)/4$ holds. 
\end{rem} 

\begin{rem} 
We imposed the condition that the Northcott numbers are positive real numbers on Question \ref{IntegerNorth} (2) since we have already known the following trivial examples. 
For all $\g\in\R$, the field $\QB$ satisfies that 
$\Nor_\g(\QB)=\Nor_\g(\O_{\QB})=\Nor_\g^{\t{hs}}(\O_{\QB})=0$ and any number field $K$ does that $\Nor_\g(K)=\Nor_\g(\O_K)=\Nor_\g^{\t{hs}}(\O_K)=\infty$. 
\end{rem}

\section{Remarks on the cases $\g=1$ and $\g=-\infty$}\label{Supp} 

In this section, we partially deal with the cases $\g=1$ and $\g=-\infty$ of the conditions in Theorem \ref{main}. 
Before dealing with the case $\g=1$, we note that constructing a field $L$ satisfying the condition (2) or (3) in Theorem \ref{main} of $\g=1$ will disprove the Lehmer conjecture. 
Thus we deal with neither the case $\g=1$ of the conditions in Theorem \ref{main} (2) nor (3). 
Now we give a field $L\subset\QB$ such that $I_N(L)=I_B(L)=[1,\infty)$. 

\begin{prop}\label{g=1} 
Let $(p_i)_{i\in\N}$ and $(q_i)_{i\in\N}$ be strictly increasing sequences of prime numbers with $q_i<p_{i+1}$ and $p_i<q_i<2p_i$ for all $i\in\N$. 
Then the field $L:=\Q((p_i/q_i)^{1/p_i} \mid i\in\N)$ satisfies that $I_N(L)=[1,\infty)$. 
\end{prop}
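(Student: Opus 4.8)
The plan is to prove the two inclusions of the half-line separately. By Proposition~\ref{interval} the set $I_N(L)$ is automatically an interval of the form $(\g,\infty)$ or $[\g,\infty)$, so it suffices to check that $L$ has $1$-{\rm (N)} and that $L$ fails $\g$-{\rm (B)} — hence a fortiori $\g$-{\rm (N)} — for every $\g<1$. The first fact pins the left endpoint at $1$ and makes it belong to $I_N(L)$; the second excludes everything strictly below $1$; together they force $I_N(L)=[1,\infty)$.

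For the $1$-{\rm (N)} assertion I would apply Proposition~\ref{Criteria} with the choice $d_i:=p_i$. Its hypotheses are met: $(p_i)$, $(q_i)$, and $(d_i)=(p_i)$ are strictly increasing sequences of primes, and from $p_i<q_i<2p_i$ together with $q_i<p_{i+1}$ one gets the chain $p_1<q_1<p_2<q_2<\cdots$, so for each $i$ the primes $p_i,q_i$ are larger than every element of $\{d_1,p_1,q_1,\ldots,d_{i-1},p_{i-1},q_{i-1}\}=\{p_1,q_1,\ldots,p_{i-1},q_{i-1}\}$ and hence lie outside it. With $d_i=p_i$ the quantity in the case $\g=1$ collapses to $V(i,1)=\log(p_i)-\f{\log(p_i)}{2}=\f{\log(p_i)}{2}$, which tends to infinity. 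Hence $\t{Nor}_1(L)\geq\liminf_{i\to\infty}V(i,1)=\infty$, so $L$ has $1$-{\rm (N)} by Proposition~\ref{Criteria}~(1).

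For the failure of $\g$-{\rm (B)} when $\g<1$, fix such a $\g$ and put $a_i:=(p_i/q_i)^{1/p_i}\in L$. As in the proof of Proposition~\ref{Criteria}, the element $q_i a_i=(p_i q_i^{p_i-1})^{1/p_i}$ is a root of the $p_i$-Eisenstein polynomial $X^{p_i}-p_i q_i^{p_i-1}$, so $\deg(a_i)=[\Q((p_i q_i^{p_i-1})^{1/p_i}):\Q]=p_i$; in particular the $a_i$ are pairwise distinct, their degrees being distinct primes. Using $h(\alpha^n)=|n|h(\alpha)$ and the fact that $p_i,q_i$ are distinct primes, $h(a_i)=\f{1}{p_i}h(p_i/q_i)=\f{\log q_i}{p_i}>0$, so no $a_i$ is a root of unity. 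Consequently
\[
h_\g(a_i)=\deg(a_i)^\g h(a_i)=\f{\log q_i}{p_i^{1-\g}}<\f{\log(2p_i)}{p_i^{1-\g}},
\]
and the right-hand side tends to $0$ as $i\to\infty$ because $1-\g>0$ and $p_i\to\infty$. Thus $L\setminus\mu_L$ contains infinitely many elements with $h_\g$-value below any prescribed $C>0$, so $L$ does not have $\g$-{\rm (B)}, hence not $\g$-{\rm (N)}. Combining with the previous paragraph and Proposition~\ref{interval} gives $I_N(L)=[1,\infty)$ (and, by the same token, $I_B(L)=[1,\infty)$).

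I do not anticipate a genuine obstacle: the argument rests entirely on Propositions~\ref{interval} and~\ref{Criteria} together with the standard degree-and-height bookkeeping for $(p_i/q_i)^{1/p_i}$, which is exactly the Eisenstein-plus-multiplicativity computation already carried out in Section~\ref{Kakai}. The only point needing a little care is verifying that the substitution $d_i=p_i$ still leaves the three sequences "independent" in the precise sense required by Proposition~\ref{Criteria}, and this is immediate from $q_i<p_{i+1}$.
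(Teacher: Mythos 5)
Your proposal is correct and follows essentially the same route as the paper: apply Proposition~\ref{Criteria} with $d_i=p_i$ to compute $V(i,1)=\tfrac12\log(p_i)\to\infty$, and then exhibit the elements $(p_i/q_i)^{1/p_i}$ with $h_\e$-values $\log(q_i)/p_i^{1-\e}\to 0$ to rule out $\e$-(B) for every $\e<1$. The extra bookkeeping you include (verifying the hypotheses of Proposition~\ref{Criteria} and checking $\deg(a_i)=p_i$ via the Eisenstein argument) is sound and merely makes explicit what the paper leaves implicit.
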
 

\begin{proof} 
Let $(d_i)_{i\in\N}:=(p_i)_{i\in\N}$ and $V(i,\g)$ be the quantity defined in Proposition \ref{Criteria}. 
Since we have 
\[ 
V(i,1) 
= \f{1}{2}\log(p_i) 
\rightarrow \infty 
\] 
as $i\rightarrow\infty$, the field $L$ has $1$-(N) by Proposition \ref{Criteria}. 

For $\e<1$, because we observe that 
\[ 
0
< h_\e((p_i/q_i)^{1/p_i}) 
= \f{\log(q_i)}{p_i^{1-\e}} 
< \f{\log(2p_i)}{p_i^{1-\e}} 
\rightarrow 0 
\] 
as $i\rightarrow\infty$, the field $L$ does not have $\e$-(B). 
\end{proof}

Next, we deal with fields $L\subset\QB$ such that $I_N(L)=(1,\infty)\subsetneq [1,\infty)=I_B(L)$. 
By using the result in \cite{Amo}, we can construct for any given $c>0$ a field $L\subset\QB$ satisfying that $c\leq \Nor_1(L)<\infty$. 
For each integer $b\geq2$ and prime number $p\geq3$, we set 
\begin{align*} 
L_{b,p}&:=\Q(\zeta_{p^i},b^{1/p^i} \mid i\in\N), \\
\langle b\rangle&:=(\textrm{the subgroup of }L_{b,p}^\times\textrm{ generated by }b), \t{ and}  \\
\sqrt{\langle b\rangle}&:=\l\{ a\in L_{b,p} \mid a^n\in\langle b\rangle \text{ for some }n\in\N \r\},  
\end{align*}
where $\zeta_m$ is a primitive $m$-th root of unity for each $m\in\N$. 
Amoroso gave the following theorem. 

\begin{thm}[{\cite[Theorem 3.3]{Amo}}]\label{Amoroso} 
If $p\nmid b$ and $p^2\nmid b^{p-1}-1$, the set $L_{b,p}\setminus\sqrt{\langle b\rangle}$ has $0$-{\rm (B)}. 
\end{thm}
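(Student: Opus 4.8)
The plan is to prove a uniform lower bound $h(\alpha)\geq\e_0>0$ for all $\alpha\in L_{b,p}\setminus\sqrt{\langle b\rangle}$ by a local argument at the prime $p$, refining the conjugates--modulo--a--prime method of Amoroso--Dvornicich; the hypotheses $p\nmid b$ and $p^2\nmid b^{p-1}-1$ will enter precisely to control the ramification of $p$. First I would record the shape of $L:=L_{b,p}$: it is Galois over $\Q$, the group $\mathrm{Gal}(L/\Q)$ is an extension of $\mathrm{Gal}(\Q(\zeta_{p^\infty})/\Q)\cong\Z_p^{\times}$ by the Kummer subgroup $\mathrm{Gal}(L/\Q(\zeta_{p^\infty}))\hookrightarrow\Z_p$, and the only ramified rational primes are $p$ and the primes dividing $b$. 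It suffices to bound $h(\alpha)$ when $\alpha$ lies in a finite layer $L_n:=\Q(\zeta_{p^n},b^{1/p^n})$. Here the condition $p^2\nmid b^{p-1}-1$, which for the odd prime $p$ amounts to saying that $b$ is a $p$-adic unit that is not a $p$-th power in $\Q_p^{\times}$, is exactly what guarantees that $[L_n:\Q]$ is maximal and, more importantly, that $p$ is totally and wildly ramified in each $L_n$ with a ramification filtration whose (upper-numbering) depth grows with $n$ rather than collapsing.

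Given $\alpha\in L_n\setminus\sqrt{\langle b\rangle}$, I would fix a prime $\P$ of $\QB$ above $p$, choose an automorphism $\sigma$ lying sufficiently deep in the wild ramification subgroup of the decomposition group at $\P$, and form from $\alpha$ and the conjugate $\sigma(\alpha)$ an auxiliary algebraic number $\gamma$ --- roughly of the form $\gamma=\sigma(\alpha)\big/\alpha^{q}-1$ for the appropriate prime power $q$ --- with two features. First, $\gamma\neq0$: an identity $\sigma(\alpha)=\alpha^{q}$ would relate $\alpha$ to a Galois conjugate of the same height and, combined with the explicit action of $\mathrm{Gal}(L_n/\Q)$ on the generators $\zeta_{p^n}$ and $b^{1/p^n}$, would force a multiplicative relation placing $\alpha$ in $\sqrt{\langle b\rangle}$; this is exactly where the hypothesis $\alpha\notin\sqrt{\langle b\rangle}$ is used. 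Second, the valuation of $\gamma$ at $\P$, normalized so that $v_\P(p)=1$, is bounded below by an absolute constant, because $\sigma$ was chosen deep in the ramification filtration and that filtration does not collapse --- this is where the hypotheses on $b$ are decisive. The product formula applied to $\gamma$ then gives $h(\gamma)\geq\e_1>0$, and comparison with the trivial estimate $h(\gamma)\leq(q+1)h(\alpha)+\log2$ yields $h(\alpha)\geq\e_0$. One also checks that $\sqrt{\langle b\rangle}$ is the least set one can discard: it contains $\mu_L$ and all the $b^{1/p^i}$, whose heights tend to $0$, and these are precisely the elements for which the construction of $\gamma$ degenerates.

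The main obstacle is arranging the two features of $\gamma$ simultaneously and uniformly in $n$: pushing $\sigma$ deeper in the ramification filtration improves the second feature but shrinks the pool of available automorphisms, which threatens the first, and one cannot escape this tension by switching to an auxiliary unramified prime $\ell\nmid bp$, since the residue degree of $\ell$ in $L_n$ (and the size of the relevant Frobenius conjugacy class) grows with $n$, degrading the bound to $0$. Carrying this out therefore requires an explicit grip on the higher ramification of $p$ in $L_{b,p}$ --- exactly what $p\nmid b$ and $p^2\nmid b^{p-1}-1$ provide --- and making the resulting constant $\e_0$ independent of the layer $n$ is the technical heart of the argument.
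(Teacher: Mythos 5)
This statement is not proved in the paper: it is quoted verbatim from Amoroso (\cite[Theorem 3.3]{Amo}) and used as a black box in the proof of Proposition \ref{g=1B}. There is therefore no in-paper proof to compare your attempt against. Evaluated on its own terms, your sketch points in the right general direction --- a conjugates--modulo--$p$ argument \`a la Amoroso--Dvornicich, with the hypotheses $p\nmid b$ and $p^2\nmid b^{p-1}-1$ entering through the ramification of $p$ in $L_{b,p}$ --- and it is a genuine virtue that you correctly identify $\sqrt{\langle b\rangle}$ as the exact degenerate locus and flag the $n$-uniformity as the crux. But the writeup is a strategy, not a proof, and it has one concrete internal inconsistency worth fixing before anything else.

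You propose taking $\sigma$ ``sufficiently deep in the wild ramification subgroup'' and then forming $\gamma=\sigma(\alpha)/\alpha^{q}-1$ for a prime power $q$. These two choices belong to different mechanisms. A $\sigma$ deep in wild inertia acts trivially on residue fields, so the congruence it buys you is $\sigma(\alpha)\equiv\alpha\pmod{\P^{i+1}}$ (with $i$ tied to the ramification filtration), and the natural auxiliary is $\sigma(\alpha)/\alpha-1$; the exponent $q$ plays no role. Conversely, a congruence of the form $\sigma(\alpha)\equiv\alpha^{q}\pmod{\P}$ requires $\sigma$ to induce the $q$-power Frobenius on the residue field, which is incompatible with $\sigma$ lying in (wild) inertia. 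In the cyclotomic Amoroso--Dvornicich argument the relevant ``$p$-power map'' $\zeta\mapsto\zeta^{p}$ is not a Galois automorphism at all but a descent to a lower layer; reproducing that trick in the non-abelian field $L_{b,p}$ is exactly where the work is, and your sketch slides over it. Relatedly, the nonvanishing step as written is too weak: from $\sigma(\alpha)=\alpha^{p}$ with $\sigma$ of finite order one only concludes $\alpha$ is a root of unity, which is a much smaller set than $\sqrt{\langle b\rangle}$, so the hypothesis $\alpha\notin\sqrt{\langle b\rangle}$ is not actually being used at the strength the theorem requires. Finally, the point you yourself call ``the technical heart'' --- choosing $\sigma$ so that the valuation gain and the nonvanishing can be achieved simultaneously with a constant independent of the layer $n$ --- is stated as a goal but not argued, and it is precisely the part that the hypotheses on $b$ and the explicit higher-ramification data of $\Q_{p}(\zeta_{p^{n}},b^{1/p^{n}})/\Q_{p}$ must be marshalled to deliver. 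As it stands the proposal is a plausible blueprint with the key lemma missing, not a proof.
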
 

\begin{prop}\label{g=1B} 
Let $c>0$ be a real number. 
We also let a prime number $b\in\N$ satisfy that $b\geq\exp(c)$ and $b\in\l\{ 9n+2 \mid n\in\Z \r\}$.  
Then the field $L'_{b,3}:=\Q(b^{1/3^i} \mid i\in\N)$ satisfies that $I_N(L'_{b,3})=(1,\infty)\subsetneq[1,\infty)=I_B(L'_{b,3})$ and $c\leq {\rm Nor}_1(L'_{b,3})<\infty$. 
\end{prop}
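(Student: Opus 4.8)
The plan is to prove the stronger statement $\mathrm{Nor}_1(L'_{b,3})=\log b$, using Amoroso's Theorem \ref{Amoroso} as the only nontrivial input, and then to read off $I_B(L'_{b,3})$. First I would check that Theorem \ref{Amoroso} applies with $p=3$: since $b$ is a prime with $b\equiv 2\pmod{9}$, we have $3\nmid b$, while $b^2-1\equiv 3\pmod{9}$ gives $3^2\nmid b^{3-1}-1$. Hence $L_{b,3}\setminus\sqrt{\langle b\rangle}$ has $0$-(B); as every root of unity $\zeta$ satisfies $\zeta^n=1=b^0\in\langle b\rangle$ and therefore lies in $\sqrt{\langle b\rangle}$, the set $L_{b,3}\setminus\sqrt{\langle b\rangle}$ contains no roots of unity, so there is $C_0>0$ for which $T:=\{a\in L_{b,3}\setminus\sqrt{\langle b\rangle}\mid h(a)<C_0\}$ is finite. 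I would also record the elementary facts $L'_{b,3}=\bigcup_{i\in\N}\Q(b^{1/3^i})\subseteq L_{b,3}$, that $X^{3^i}-b$ is Eisenstein at $b$ so $[\Q(b^{1/3^i}):\Q]=3^i$, and that $h(b^{1/3^i})=\tfrac{1}{3^i}\log b$ (from $h(\alpha^m)=|m|h(\alpha)$ and $h(b)=\log b$). In particular each $b^{1/3^i}$ is a non-torsion element with $h_1(b^{1/3^i})=\log b$, and $\mu_{L'_{b,3}}=\{\pm 1\}$ because $L'_{b,3}\subseteq\R$.

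The key step is to describe $\sqrt{\langle b\rangle}\cap L'_{b,3}$. If $a\in L'_{b,3}$ satisfies $a^n=b^k$ for some $n\in\N$ and $k\in\Z$, then $a=\pm b^{k/n}$; write $k/n=s/t$ in lowest terms. By Bézout, $b^{1/t}\in\Q(b^{s/t})$, so $\Q(b^{1/t})\subseteq\Q(b^{s/t})\subseteq L'_{b,3}$; being a finite extension inside $\bigcup_j\Q(b^{1/3^j})$, $\Q(b^{1/t})$ lies in some $\Q(b^{1/3^i})$, whence $[\Q(b^{1/t}):\Q]\mid 3^i$. Since $X^t-b$ is irreducible over $\Q$ ($b$ being prime), $[\Q(b^{1/t}):\Q]=t$, hence $t=3^j$ and $\Q(b^{s/3^j})=\Q(b^{1/3^j})$. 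Consequently $\deg(a)=3^j$ and $h(a)=\tfrac{|s|}{3^j}\log b$ (again from $h(\alpha^m)=|m|h(\alpha)$ and $h(-x)=h(x)$), so
\[
h_1(a)=\deg(a)\,h(a)=|s|\log b\geq\log b
\]
for every non-torsion $a\in\sqrt{\langle b\rangle}\cap L'_{b,3}$, that is, whenever $s\neq 0$.

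For the Northcott number: the infinitely many elements $b^{1/3^i}$ all have $h_1(b^{1/3^i})=\log b$, so $\mathrm{Nor}_1(L'_{b,3})\leq\log b$. Conversely, fix $C<\log b$ and let $a\in L'_{b,3}$ be non-torsion with $h_1(a)<C$. By the previous paragraph $a\notin\sqrt{\langle b\rangle}$, hence $a\in L_{b,3}\setminus\sqrt{\langle b\rangle}$; if $h(a)<C_0$ then $a\in T$, while if $h(a)\geq C_0$ then $\deg(a)\leq h_1(a)/C_0<C/C_0$ and $h(a)\leq h_1(a)<C$, so $a$ lies in the finite set $\{x\in\QB\mid [\Q(x):\Q]\leq C/C_0,\ h(x)<C\}$ (finite by Northcott's theorem, a special case of Theorem \ref{North}). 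As $\mu_{L'_{b,3}}$ is finite, $N_1(L'_{b,3},C)<\infty$ for every $C<\log b$, so that $\mathrm{Nor}_1(L'_{b,3})\geq\log b$. Therefore $\mathrm{Nor}_1(L'_{b,3})=\log b<\infty$, and $b\geq\exp(c)$ gives $\mathrm{Nor}_1(L'_{b,3})\geq c$. (Alternatively this step may be run through Lemma \ref{NNumber} with $A_i=\Q(b^{1/3^i})$.)

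It remains to identify $I_B(L'_{b,3})$. Since $L'_{b,3}\setminus\mu_{L'_{b,3}}\subseteq L'_{b,3}$, we get $\mathrm{Nor}_1(L'_{b,3}\setminus\mu_{L'_{b,3}})\geq\mathrm{Nor}_1(L'_{b,3})=\log b>0$, so $L'_{b,3}$ has $1$-(B). On the other hand, for $\varepsilon<1$ the estimate $h_\varepsilon(b^{1/3^i})=3^{(\varepsilon-1)i}\log b\to 0$ as $i\to\infty$ exhibits infinitely many distinct non-torsion elements of $L'_{b,3}$ of arbitrarily small $\varepsilon$-weighted height, so $L'_{b,3}$ does not have $\varepsilon$-(B). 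By Proposition \ref{interval} these two facts force $I_B(L'_{b,3})=[1,\infty)$. The step I expect to be the main obstacle is the description of $\sqrt{\langle b\rangle}\cap L'_{b,3}$ in the second paragraph: the clean identity $h_1(a)=|s|\log b$ there is exactly what powers the lower bound $\mathrm{Nor}_1(L'_{b,3})\geq\log b$, and it relies on the irreducibility of $X^t-b$ over $\Q$ together with the attendant degree and height computations; everything else is a black-box application of Amoroso's theorem or routine.
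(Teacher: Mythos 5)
Your proof is correct and follows essentially the same route as the paper's: apply Amoroso's theorem to get $0$-(B) on $L_{b,3}\setminus\sqrt{\langle b\rangle}$, upgrade to a finiteness statement for $h_1$ (the paper cites Proposition \ref{weak}, you unwind its proof inline), and bound $h_1$ from below on $\sqrt{\langle b\rangle}\cap L'_{b,3}$. The main difference is that you supply the explicit degree/height computation showing $h_1(a)=|s|\log b\geq\log b$ for non-torsion $a=\pm b^{s/t}\in\sqrt{\langle b\rangle}\cap L'_{b,3}$ — a step the paper simply asserts — and you also spell out why $I_B(L'_{b,3})=[1,\infty)$, which the paper leaves implicit.
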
 

\begin{rem} 
By Dirichlet's theorem on primes in arithmetic progressions, such $b$ as in Proposition \ref{g=1B} always exists. 
\end{rem} 

\begin{proof}[Proof of Proposition \ref{g=1B}] 
By Corollary \ref{inf} and the equality $\mu_{L'_{b,3}}=\{\pm1\}$, it is sufficient only to prove the inequalities $c\leq {\rm Nor}_1(L'_{b,3})<\infty$. 
Note that $3\nmid b=9n+2$ and $3^2\nmid b^{3-1}-1=(9n+3)(9n+1)$ hold. 
Thus, by Proposition \ref{weak} and Theorem \ref{Amoroso}, the set $L'_{b,3}\setminus\sqrt{\langle b\rangle}$ has $1$-(N). 
Thus there are only finitely many $a\in L'_{b,3}\setminus\sqrt{\langle b\rangle}$ such that $h_1(a)\leq \log(b)$. 
On the other hand, we know that $h_1(a)\geq \log(b)$ for all $a\in\sqrt{\langle b\rangle}\setminus\{\pm1\}$ and that  $h_1(b^{1/3^i})=\log(b)$ for all $i\in\N$. 
Hence we have $\Nor_1(L'_{b,3})=\log(b)\geq c$. 
\end{proof}

Finally, we deal with fields $L\subset\QB$ such that $I_N(L)=I_B(L)=\R$. 
We remark that any number field satisfies the condition by Theorem \ref{North}. 
Here we give such a field of infinite extensions of $\Q$. 

\begin{prop}\label{g=-infty} 
Let $(d_i)_{i\in\N}$, $(p_i)_{i\in\N}$, and $(q_i)_{i\in\N}$ be strictly increasing sequences of prime numbers.
Assume that the inequalities 
\[ 
\exp(d_i^{1+i^2})\leq p_i<q_i<p_{i+1}
\] 
hold for all $i\in\N$. 
Then the field $L:=\Q((p_i/q_i)^{1/d_i} \mid i\in\N)$ satisfies that $I_N(L)=\R$. 
\end{prop}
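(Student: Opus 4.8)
The plan is to deduce $I_N(L)=\R$ from Proposition \ref{Criteria} by running it for every sufficiently negative exponent. By Proposition \ref{interval} the set $I_N(L)$ is an interval of the form $(\gamma_0,\infty)$ or $[\gamma_0,\infty)$ with $\gamma_0\in\R\cup\{-\infty\}$, so it is enough to show that $L$ has $\gamma$-{\rm(N)} for every real number $\gamma<0$; this forces $\gamma_0=-\infty$, hence $I_N(L)=\R$. So I fix an arbitrary $\gamma<0$ and aim to apply Proposition \ref{Criteria} with this $\gamma$ and the given sequences $(p_i)$, $(q_i)$, $(d_i)$.

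First I would verify that the hypotheses of Proposition \ref{Criteria} hold, with $i_0=1$. The three sequences are strictly increasing sequences of primes by assumption. For $i\geq2$ we have $q_{i-1}<p_i<q_i$, so $p_i,q_i\notin\{p_1,q_1,\ldots,p_{i-1},q_{i-1}\}$; and since $x<\exp(x)$ for all $x>0$, the chain $d_j\leq d_{i-1}<d_i\leq d_i^{1+i^2}<\exp(d_i^{1+i^2})\leq p_i<q_i$ (valid for $1\leq j\leq i-1$) shows $p_i,q_i\notin\{d_1,\ldots,d_{i-1}\}$. Therefore $p_i,q_i\notin\{d_1,p_1,q_1,\ldots,d_{i-1},p_{i-1},q_{i-1}\}$ and $p_i<q_i$ for all $i>i_0=1$, so Proposition \ref{Criteria} yields
\[
{\rm Nor}_\gamma(L)\geq\liminf_{i\to\infty}V(i,\gamma),\qquad V(i,\gamma)=\frac{\log(p_i)}{(d_1\cdots d_{i-1})^{-\gamma}d_i^{1-\gamma}}.
\]

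The core estimate is to show $V(i,\gamma)\to\infty$. Using $\log(p_i)\geq d_i^{1+i^2}$ together with $d_1\cdots d_{i-1}<d_i^{i-1}$, which gives $(d_1\cdots d_{i-1})^{-\gamma}<d_i^{-\gamma(i-1)}$ because $-\gamma>0$, I obtain
\[
V(i,\gamma)\geq\frac{d_i^{1+i^2}}{d_i^{-\gamma(i-1)}\,d_i^{1-\gamma}}=d_i^{\,i^2+\gamma i}=d_i^{\,i(i+\gamma)}.
\]
Since $d_i\geq2$ and $i(i+\gamma)\to\infty$ as $i\to\infty$, the right-hand side tends to $\infty$; hence $\liminf_{i\to\infty}V(i,\gamma)=\infty$, and Proposition \ref{Criteria}(1) gives that $L$ has $\gamma$-{\rm(N)}. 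As $\gamma<0$ was arbitrary, $I_N(L)=\R$.

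I do not expect a genuine obstacle here. The one conceptual point is that the super-exponential lower bound $\exp(d_i^{1+i^2})\leq p_i$ was engineered precisely so that the exponent $i^2$ in $d_i^{1+i^2}$ overwhelms the linear-in-$i$ loss coming from the discriminant factor $(d_1\cdots d_{i-1})^{-\gamma}\leq d_i^{-\gamma(i-1)}$; this works for each fixed $\gamma$, which is why $\gamma$ must be fixed before the estimate rather than chosen uniformly. The only routine bookkeeping is the disjointness check for Proposition \ref{Criteria}, carried out above.
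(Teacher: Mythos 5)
Your proof is correct and takes essentially the same route as the paper: both reduce to Proposition \ref{Criteria} for each fixed $\gamma<0$, bound $d_1\cdots d_{i-1}\leq d_i^{i-1}$ to get $V(i,\gamma)\geq d_i^{i(i+\gamma)}\to\infty$, and conclude $\gamma$-(N) for every $\gamma<0$. The only difference is that you spell out the disjointness hypothesis check and the reduction via Proposition \ref{interval}, which the paper leaves implicit.
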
 

\begin{proof} 
Let $V(i,\g)$ be the quantity defined in Proposition \ref{Criteria}. 
For $\g<0$, since we have 
\[ 
V(i,\g) 
\geq \f{d_i^{1+i^2}}{(d_1\cdots d_{i-1})^{-\g}d_i^{1-\g}} 
\geq \f{d_i^{1+i^2}}{d_i^{1-i\g}} 
= d_i^{i(i+\g)}
\rightarrow \infty 
\] 
as $i\rightarrow\infty$, the field $L$ has $\g$-(N) by Proposition \ref{Criteria}. 
\end{proof}

\begin{ack} 
The authors would like to thank professor Pazuki, Technau, and Widmer for their favorable comments. 
The valuable comments of professor Technau made many sentences in the draft better. 
The suggestion of professor Widmer highly improved the proof of Proposition \ref{Criteria}. 
The authors express great gratitude to professor Toshiki Matsusaka for suggesting their joint work. 
Without his suggestion, this work would not exist. 
The authors are grateful to professor Masanobu Kaneko for reading the draft carefully and pointing out some errata. 
The first author thanks his doctoral advisor Yuichiro Takeda for introducing \cite{PTW2} to him. 
The authors greatly appreciate the anonymous referee for many valuable comments. 
These greatly improved the whole draft. 
The first author was supported until March 2022 by JST SPRING, Grant Number JPMJSP2136. 
The second author is supported by JSPS KAKENHI Grant Number JP20K14300. 
\end{ack}


\begin{thebibliography}{10000000} 

\bibitem[Amo16]{Amo} 
\textsc{F. Amoroso}, \textit{On a conjecture of G. R\'emond}, Ann. Sc. Norm. Super. Pisa Cl. Sci. (5) {\bf 15} (2016), 599--608. 

\bibitem[AD00]{AD} 
\textsc{F. Amoroso - R. Dvornicich,} \textit{A lower bound for the height in abelian extensions}, J. Number Theory {\bf 80} (2000), no. 2, 260--272. 

\bibitem[ADZ14]{ASZ} 
\textsc{F. Amoroso - S. David - U. Zannier}, \textit{On fields with Property }(B), Proc. Amer. Math. Soc. {\bf 142} (2014), no. 6, 1893--1910. 

\bibitem[AM16]{AM} 
\textsc{F. Amoroso - D. Masser}, \textit{Lower bounds for the height in Galois extensions}, Bull. Lond. Math. Soc. {\bf 48} (2016), no. 6, 1008--1012. 

\bibitem[BG06]{BG} 
\textsc{E. Bombieri - W. Gubler}, \textit{Heights in Diophantine Geometry}, New Mathematical Monographs, 4. Cambridge University Press, Cambridge, 2006. 

\bibitem[BZ01]{BZ} 
\textsc{E. Bombieri - U. Zannier}, \textit{A note on heights in certain infinite extensions of $\Q$}, Atti Accad. Naz. Lincei Rend. Lincei Mat. Appl. {\bf 12} (2001), 5--14. 

\bibitem[CVV20]{CVV} 
\textsc{M. Castillo - X. Vidaux - C. R. Videla}, \textit{Julia Robinson numbers and arithmetical dynamic of quadratic polynomials}, Indiana Univ. Math. J. {\bf 69} (2020), no. 3, 873--885. 

\bibitem[Dim19]{Dim} 
\textsc{V. Dimitrov}, \textit{A proof of the Schinzel-Zassenhaus conjecture on polynomials}, preprint, arXiv: \url{https://arxiv.org/abs/1912.12545} 

\bibitem[Dob79]{Do} 
\textsc{E. Dobrowolski}, \textit{On a question of Lehmer and the number of irreducible factors of a polynomial}, Acta Arith. {\bf 34} (1979), no. 4, 391--401.

\bibitem[DZ08]{DZ} 
\textsc{R. Dvornicich - U. Zannier}, \textit{On the properties of Northcott and Narkiewicz for fields of algebraic numbers}, Funct. Approx. Comment. Math. {\bf 39} (2008), part 1, 163--173. 

\bibitem[FT93]{FT} 
\textsc{A. Fr\"ohlich - M. J. Taylor}, \textit{Algebraic Number Theory}, Cambridge Studies in Advanced Mathematics, 27. Cambridge University Press, Cambridge, 1993. 

\bibitem[GR19]{GR} 
\textsc{P. Gillibert - G. Ranieri}, \textit{Julia Robinson numbers}, Int. J. Number Theory {\bf 15} (2019), no. 8, 1565--1599. 

\bibitem[Hab13]{Hab} 
\textsc{P. Habegger}, \textit{Small height and infinite nonabelian extensions}, Duke Math. J. {\bf 162} (2013), no. 11, 2027--2076. 

\bibitem[Hil98]{Hi} 
\textsc{D. Hilbert}, \textit{The Theory of Algebraic Number Fields}, Springer-Verlag, Berlin, 1998. 

\bibitem[Neu99]{Ne} 
\textsc{J. Neukirch}, \textit{Algebraic Number Theory}, Springer-Verlag, Berlin, 1999. 

\bibitem[PTW21]{PTW2} 
\textsc{F. Pazuki - N. Technau - M. Widmer}, \textit{Northcott numbers for the house and the Weil height}, Bull. Lond. Math. Soc., Early View: \url{https://doi.org/10.1112/blms.12662} 

\bibitem[Rob62]{Rob} 
\textsc{J. Robinson}, \textit{On the decision problem for algebraic rings}, in Studies in Mathematical Analysis and Related Topics (Stanford University Press, Stanford, CA, 1962), pp. 297--304. 

\bibitem[RT95]{RT} 
\textsc{D. Roy - J. L. Thunder}, \textit{A note on Siegel's lemma over number fields}, Monatsh. Math. {\bf 120} (1995), no. 3-4, 307--318. 

\bibitem[Rup98]{Rup} 
\textsc{W. M. Ruppert}, \textit{Small generators of number fields}, Manuscripta Math. {\bf 96} (1998), no. 1, 17--22. 

\bibitem[Sch73]{Schi} 
\textsc{A. Schinzel}, \textit{On the product of the conjugates outside the unit circle of an algebraic number}, Acta Arith. {\bf 24} (1973), 385--399. 

\bibitem[Sil84]{Si84}
\textsc{J. H. Silverman}, \textit{Lower bounds for height functions}, Duke Math. J. {\bf 51} (1984), no. 2, 395--403. 

\bibitem[Sil07]{Si07} 
\textsc{J. H. Silverman}, \textit{The Arithmetic of Dynamical Systems}, Graduate Texts in Mathematics, 241. Springer, New York, 2007. 

\bibitem[VW15]{VW} 
\textsc{J. Vaaler - M. Widmer}, \textit{Number fields without small generators}. Math. Proc. Cambridge Philos. Soc. {\bf 159} (2015), no. 3, 379--385. 

\bibitem[VV15]{VV2} 
\textsc{X. Vidaux - C. R. Videla}, \textit{Definability of the natural numbers in totally real towers of nested square roots}, Proc. Amer. Math. Soc. {\bf 143} (2015), no. 10, 4463--4477. 

\bibitem[VV16]{VV} 
\textsc{X. Vidaux - C. R. Videla}, \textit{A note on the Northcott property and undecidability}, Bull. Lond. Math. Soc. {\bf 48} (2016), no. 1, 58--62. 

\bibitem[Viv04]{Fi} 
\textsc{F. Viviani}, \textit{Ramification groups and Artin conductors of radical extensions of $\Q$}, J. Th\'eor. Nombres Bordeaux {\bf 16} (2004), no. 3, 779--816. 

\bibitem[Wid11]{Wi} 
\textsc{M. Widmer}, \textit{On certain infinite extensions of the rationals with Northcott property}, Monatsh. Math. {\bf 162} (2011), no. 3, 341--353. 

\end{thebibliography}
\end{document}